\DeclareFontFamily{U}{mathx}{}
\DeclareFontShape{U}{mathx}{m}{n}{<-> mathx10}{}
\DeclareSymbolFont{mathx}{U}{mathx}{m}{n}
\DeclareMathAccent{\widecheck}{0}{mathx}{"71}
\def\wt#1{\widetilde{#1}}
\newcommand{\C}{\mathbb{C}}
\newcommand{\N}{\mathbb{N}}
\newcommand{\R}{\mathbb{R}}
\newcommand{\supp}{\operatorname{supp}}
\newcommand{\dist}{\operatorname{dist}}
\newcommand{\rank}{\operatorname{rank}}
\newtheorem{thm}{Theorem}[section]
\newtheorem{prop}[thm]{Proposition}
\newtheorem{lemma}[thm]{Lemma}
\numberwithin{equation}{section}
\def\fS{{\mathfrak {S}}}
\def\fT{{\mathfrak {T}}}
\def\bfb{{\mathbf {b}}}
\def\bfu{{\mathbf {u}}}
\def\bfv{{\mathbf {v}}}
\def\bfw{{\mathbf {w}}}
\def\cA{{\mathcal {A}}}
\def\cC{{\mathcal {C}}}
\def\cD{{\mathcal {D}}}
\def\cF{{\mathcal {F}}}
\def\cG{{\mathcal {G}}}
\def\cH{{\mathcal {H}}}
\def\cI{{\mathcal {I}}}
\def\cK{{\mathcal {K}}}
\def\cO{{\mathcal {O}}}
\def\cS{{\mathcal {S}}}
\def\cT{{\mathcal {T}}}
\newcommand{\ep}{{\varepsilon}}
\newcommand{\inp}[2]{\langle #1, #2\rangle}
\begin{document}

\author[Ryu]{Jaehyeon Ryu}
\address{Department of Mathematics, Ewha Womans University, Seoul
03760, Republic of Korea}
\email{jhryu67@ewha.ac.kr}

\keywords{Almost everywhere convergence, Bochner--Riesz mean}
\subjclass[2020]{42B15}
\title[Almost everywhere convergence]{On almost everywhere convergence of Bochner--Riesz means below the critical index}

\begin{abstract}
In this paper, we study the almost everywhere convergence problem for the Bochner--Riesz means $S_t^\delta f$ for $f\in L^p(\mathbb R^d)$ in the subcritical range
\[
    0\le \delta < \delta(d,p):=d\Big(\frac12-\frac1p\Big)-\frac12,
    \qquad \frac{2d}{d-1}<p<\infty,
\]
where $d\ge 2$. In this regime, the operator need not be well defined for fixed $t>0$, even as a tempered distribution. Nevertheless, the family $\{S_t^\delta f\}_{t>0}$ can still be interpreted as a distribution on $\mathbb R^d\times(0,\infty)$. We introduce an admissible class $\mathcal C_{p,\delta}\subset L^p(\mathbb R^d)$ on which this distribution has sufficient regularity in the $t$ variable to formulate the almost everywhere convergence problem. We establish three results concerning this class. First, we show that $\mathcal C_{p,\delta}\neq L^p(\mathbb R^d)$, and thus the almost everywhere convergence problem cannot be formulated for all $L^p$ functions below the critical index. Second, we show that this admissible class is nevertheless large in the sense that, for every $f\in L^p(\mathbb R^d)$, the pullback $f_V=f(V^{-1}\cdot)$ is admissible for Haar-a.e. volume-preserving upper triangular matrix $V$ with positive diagonal entries. Finally, we construct an $f\in \mathcal C_{p,\delta}$ for which $S_t^\delta f$ fails to converge almost everywhere as $t\to\infty$. A key ingredient in our argument is a multiparameter variant of the Bochner--Riesz means.
\end{abstract}

\maketitle

\section{Introduction}
We consider the Bochner--Riesz means on $\R^d$
\[
S_t^\delta f(x):=\frac{1}{(2\pi)^d}\int \Big(1-\frac{|\xi|^2}{t^2}\Big)_+^\delta e^{i\inp x \xi}\hat{f}(\xi) d\xi,\quad \delta\ge 0,\ \ t>0,\ \ f\in \cS(\R^d).
\]
In this paper, we study almost everywhere convergence of the Bochner--Riesz means for $L^p$ functions in the range $p_* := 2d/(d-1) < p < \infty$. 
A classical result of Carbery, Rubio de Francia, and Vega \cite{CRV88} asserts that, for all $f\in L^p(\R^d)$,
\[
\lim_{t\to\infty} S_t^\delta f(x) = f(x)\quad \text{for almost every (a.e.) $x\in \R^d$}
\]
provided that $p_* < p < \infty$ and $\delta > \delta(d,p)$, where $\delta(d,p)$ denotes the critical index
\[
\delta(d,p) = d(\frac12-\frac1p)-\frac12.
\]
For the endpoint $\delta = \delta(d,p)$, Lee and Seeger \cite{LS15} studied the convergence of variants of the Bochner--Riesz means and showed that, if $2d/(d-1)<p<\infty$, then $S_t^{\delta(d,p)}f$ converges to $f$ almost everywhere for $f\in L^{p,1}(\R^d)$.

The objective of this paper is to investigate how the almost everywhere convergence problem for the Bochner--Riesz means can be formulated for $f\in L^p$ in the subcritical range
\[
    0\le \delta < \delta(d,p),\quad p_*<p<\infty.
\]
In this regime, even formulating the problem becomes nontrivial. Indeed, the operator cannot in general be defined pointwise, or even as a tempered distribution for fixed $t$. This is due to the fact that the convolution kernel
\begin{align*}
    K^\delta(x-y) = \frac{1}{(2\pi)^d}\int \big(1-|\xi|^2\big)_+^\delta e^{i\inp {x-y}{\xi}} d\xi
\end{align*}
fails to belong to $L^{p'}(\R^d)$. We refer the reader to Carbery and Soria \cite{CaS88} and \cite{LS15} for related discussion. 

Nevertheless, to formulate the almost everywhere convergence problem, one need not define $S_t^\delta f$ separately for each fixed $t>0$. Indeed, the family $\{S_t^\delta f\}_{t>0}$ can still be interpreted as a distribution on $\mathbb R^d\times(0,\infty)$, and this observation is the starting point of this paper. Denote $\R_+ = (0,\infty)$. For $\delta\ge 0$ and $f\in L^p(\R^d)$, we then define a linear functional $S^\delta f$ on $C_c^\infty(\R^d\times \R_+)$ by
\begin{align*}
    S^\delta f (\varphi) = \int\bigg(\int t^dK^\delta(t(x-y)) \varphi(x,t) dxdt\bigg)f(y) dy,\quad \varphi\in C_c^\infty(\R^d\times \R_+).
\end{align*}
Using integration by parts, one can show that $S^\delta f(\varphi)$ is continuous in $\varphi$ for every $f\in L^p(\R^d)$. We refer to Section \ref{sec:thmxtU} below for details.
When $\delta > \delta(d,p)$, this definition agrees with the classical Bochner--Riesz means $S_t^\delta f(x)$ in the sense of distributions.

To formulate the almost everywhere convergence problem, it is natural to require that $S^\delta f$ be identified with a function of $(x,t)$ having enough regularity in the $t$ variable. For $p_* < p <\infty$ and $0\le \delta < \delta(d,p)$, we define
\begin{align*}
    \cC_{p,\delta} = \{f\in L^p(\R^d): S^\delta f\in L_{x,loc}^1(\R^d;H^s_{t,loc}(\R_+))\ \text{for some $s>\frac12$}\}.
\end{align*}
Here, $L_{x,loc}^1(\R^d;H^s_{t,loc}(\R_+))$ is the mixed-norm space defined by
\begin{align*}
    L_{x,loc}^1(\R^d;H^s_{t,loc}(\R_+)) = \{g\in \cD'(\R^d\times \R_+): \eta g\in L^1_x(H_t^s)\ \ \text{$\forall\eta\in C_c^\infty(\R^d\times \R_+)$}\},
\end{align*}
where $\cD'(\R^d\times \R_+)$ is the space of distributions on $\R^d\times \R_+$ and $H^s$ denotes the Sobolev space given by
\begin{align*}
    H^s(\R) = \{f\in \cS'(\R): (1+|\xi|^2)^{s/2}\hat f\in L^2(\R)\}.
\end{align*} 

We call $\cC_{p,\delta}$ the admissible class. By the Sobolev embedding, for $f\in \cC_{p,\delta}$, $S_t^\delta f(x)\equiv S^\delta f(x,t)$ admits a continuous representative in $t$ for a.e. $x\in \R^d$. 
Thus, $\mathcal C_{p,\delta}$ provides a natural setting in which the almost everywhere convergence problem can be formulated below the critical index.

In this paper, we study the following three questions.
\begin{itemize}[topsep=1pt, itemsep=2pt]
    \item Does $\cC_{p,\delta}$ coincide with $L^p(\R^d)$, \textit{i.e.,} $\cC_{p,\delta} = L^p(\R^d)$?
    \item If not, how large is this admissible class?
    \item Does almost everywhere convergence hold for every $f\in \cC_{p,\delta}$?
\end{itemize}

Our first main result gives a negative answer to the first question.

\begin{thm}\label{thm:notequal}
    Let $I\subset \R_+$ be a compact interval with nonempty interior. Assume that $0\le \delta < \delta(d,p)$ and $p_* < p < \infty$. Then there exist $f\in L^p(\R^d)$ and $\eta\in C_c^\infty(\R^d\times \R_+)$ such that $\supp(\eta)\subset \R^d\times I$ and 
    \[
    \eta\,S^\delta f\notin L^1_x(\R^d;H^s_t(\R_+))
    \]
    for every $s\ge 1/2$.
\end{thm}

This theorem demonstrates that, within this framework, the almost everywhere convergence problem for the Bochner--Riesz means cannot be formulated for all $f\in L^p$ when $\delta$ is below the critical index. The proof of Theorem \ref{thm:notequal} is based on the uniform boundedness principle. We shall construct test functions $\varphi_n$, uniformly normalized in the dual space of $L_x^1(H_t^{1/2})$, for which the corresponding localized linear functionals on $L^p(\mathbb R^d)$ have unbounded operator norms. This will yield the failure of local $H_t^{1/2}$ regularity for a suitable $f\in L^p$.

We now turn to the second question concerning the size of $\mathcal C_{p,\delta}$, namely, how large the class of functions is for which the almost everywhere convergence problem can be formulated. An immediate observation is that $L^2(\R^d)\cap L^p(\R^d)\subset \cC_{p,\delta}$, which implies that $\cC_{p,\delta}$ is dense in $L^p(\R^d)$. 

Our next result provides more explicit information on the size of $\mathcal C_{p,\delta}$ and shows that, although $\mathcal C_{p,\delta}\neq L^p(\R^d)$, this class is nevertheless large. To state the result, let $ST(d)_+$ denote the group of upper triangular real matrices $V$ whose diagonal entries are positive and the determinant is $1$. $ST(d)_+$ is a Lie group endowed with a left-invariant Haar measure, see Section \ref{sec:thmxtU} below.

\begin{thm}\label{thm:define-xt}
    Let $d\ge 2$, $p_* < p < \infty$, and $0\le \delta < \delta(d, p)$. Then, for every $f\in L^p(\R^d)$, we have $f_V\in \cC_{p,\delta}$ for Haar-a.e. $V\in ST(d)_+$.
\end{thm}

Here, $f_V$ denotes the pullback of $f$ by the linear map $y\mapsto V^{-1} y$ given by $f_V(y) = f(V^{-1} y)$. This theorem suggests that the admissible class is large in the following sense: although a given $L^p$ function need not be admissible, its pullback by Haar-a.e. element of $ST(d)_+$ is admissible. The proof revolves around a multiparameter variant of the Bochner--Riesz means, as described below.

Lastly, we consider whether $S^\delta f(x,t)$ converges to $f(x)$ as
$t\to\infty$ almost everywhere for all $f\in \mathcal C_{p,\delta}$. The
following result shows that this is not the case.

\begin{thm}\label{thm:failure}
Suppose that $d\ge 2$, $p_* < p <\infty$, and $0\le \delta < \delta(d,p)$. Then there exists a function $f\in \cC_{p,\delta}$ such that $S_t^\delta f(x)$ diverges as $t\to \infty$ on a set of positive measure.
\end{thm}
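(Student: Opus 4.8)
The plan is to build the counterexample $f$ as a sum $f = \sum_j f_j$ of building blocks $f_j(x) = F_j(x) e^{i|x|^2/2}$, where each $F_j$ is a smooth bump of appropriate width, height, and location, chosen so that the quadratic modulation interacts resonantly with the Bochner-Riesz kernel at a carefully selected scale $t = t_j$. The point of the modulation $e^{i|x|^2/2}$ is twofold: on the one hand it forces $f$ into the class $\mathcal C^{p,\delta}$ (the oscillation makes the improper integrals $S_t^\delta[k,\varphi]f$ converge pointwise and independently of $\varphi$, as indicated in the discussion preceding the theorem), so that $S_t^\delta f$ is honestly well-defined a.e.; on the other hand, when one computes $S_{t_j}^\delta f_j(x)$ via stationary phase, the critical point of the combined phase $t_j\langle x-y,\xi\rangle$ coming from the kernel and $|y|^2/2$ coming from the modulation produces a contribution of size comparable to $\|K_{t_j}^\delta\|$-type quantities rather than the much smaller size dictated by absolute integrability. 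Concretely, I expect each $f_j$ to contribute, on a set of measure bounded below independently of $j$ (say a fixed ball, or a ball of radius $\sim t_j$ scaled appropriately), a value $|S_{t_j}^\delta f_j(x)| \gtrsim c_j$ with $c_j\to\infty$ while keeping $\|f_j\|_{L^p}$ summable.

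The key steps, in order, are as follows. First, normalize: fix the profile $F$ a single smooth bump and set $F_j(x) = a_j\, F(R_j^{-1}(x - x_j))$ with parameters $a_j, R_j, x_j$ and resonant scales $t_j$ to be chosen. Second, verify membership in $\mathcal C^{p,\delta}$: show that $\sum_j \|f_j\|_{W^{N,p}}<\infty$ for the relevant $N$, so that $f = \sum f_j$ is (a modulation of) a $W^{N,p}$ function and hence lies in $\mathcal C^{p,\delta}$ by the remark in Section 2; this pins a relation among $a_j, R_j$. Third, the main estimate: using that $\widehat{(1-|\xi|^2)_+^\delta}$ has the asymptotic expansion $K_t^\delta(x) \sim t^{d} \cdot (t|x|)^{-d/2-\delta} e^{\pm i t|x|}\,(\text{smooth})$ for $t|x|\gg 1$ (plus a rapidly decaying tail), compute $S_{t_j}^\delta f_j(x)$ by inserting this asymptotic and performing stationary phase in $y$ against the phase $t_j|x-y| + |y|^2/2$; choosing $t_j$ so that the stationary point lands in the support of $F_j$ and so that $|x-y|\sim t_j$ there, one reads off a lower bound $|S_{t_j}^\delta f_j(x)| \gtrsim a_j R_j^{d}\, t_j^{d} (t_j\cdot t_j)^{-d/2-\delta} = a_j R_j^d t_j^{-2\delta}$ (schematically) on a set of $x$ of measure $\gtrsim R_j^d$. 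Fourth, control the cross terms: show $\sum_{i\ne j} |S_{t_j}^\delta f_i(x)|$ is small compared to the main term for $x$ in that set — this uses the rapid decay of $F$ together with non-stationary phase when the scales $t_i, t_j$ or the centers $x_i$ are far apart, and is the reason the $t_j$ (and $x_j$) must be chosen lacunary/separated. Fifth, arithmetic: choose $a_j, R_j, t_j, x_j$ so that $\|f\|_{L^p}<\infty$, $f\in\mathcal C^{p,\delta}$, the main lower bounds $c_j := a_j R_j^d t_j^{-2\delta} \to \infty$, and the sets of positive measure on which divergence occurs can be arranged to overlap in a set of positive measure (e.g. by taking all $x_j$ near a common ball, rescaling, or invoking a Borel–Cantelli / limsup argument over $j$); here the hypothesis $\delta < \delta(d,p)$ is exactly what makes the system of inequalities solvable, since $\delta(d,p)$ is the threshold at which $a_j R_j^d t_j^{-2\delta}$ can be forced to infinity under an $L^p$ constraint.

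I expect the main obstacle to be the \emph{cross-term / interference analysis} together with making the divergence set genuinely of positive measure rather than merely nonempty or null. Getting a single $x$ with a large value is a routine stationary-phase computation; the difficulty is (a) ensuring the lower bound persists on a set of $x$ whose measure does not shrink too fast, so that after summing over $j$ a $\limsup$-type set of positive measure survives, and (b) guaranteeing that the contributions $S_{t_j}^\delta f_i$ for $i\neq j$ — including the low-frequency/non-oscillatory part of the kernel and the boundary terms in the stationary-phase expansions — do not cancel the main term. This will force a delicate, probably lacunary, choice of the parameters $(a_j, R_j, t_j, x_j)$ and careful bookkeeping of the error terms in the kernel asymptotics; once the parameters are pinned down, the remaining verifications (Sobolev norm summability, $L^p$ summability, membership in $\mathcal C^{p,\delta}$) should be mechanical.
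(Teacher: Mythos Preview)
Your high-level strategy (a Stein-type construction $f=\sum_j a_j f_j$ with $f_j$ producing large maximal values on sets of measure $\gtrsim 1$, followed by a $\limsup$ argument) is exactly the paper's strategy. The genuine gap is in the choice of building blocks and the lower-bound computation. By committing to $f_j=F_j\,e^{i|x|^2/2}$ and invoking the $W^{N,p}$ criterion for membership in $\mathcal C^{p,\delta}$, you freeze the chirp rate at a single value. This forces the physical center $|x_j|$ and the effective frequency (hence the resonant $t_j$) to be comparable, and a careful stationary-phase computation then shows the normalized quantity $\|f_j\|_p^{-1}\sup_t|S_t^\delta f_j(x)|$ blows up only for $\delta$ below a threshold strictly smaller than $\delta(d,p)$ (indeed negative for $p$ near $2d/(d-1)$). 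Your schematic estimate $|S_{t_j}^\delta f_j(x)|\gtrsim a_j R_j^d\,t_j^{-2\delta}$ is already symptomatic: it uses the exponent $-d/2-\delta$ in the kernel asymptotic where $-(d+1)/2-\delta$ is correct, and it multiplies amplitude by the full volume $R_j^d$, which is illegitimate once the combined phase $t_j|x-y|+|y|^2/2$ oscillates over $\operatorname{supp}F_j$; replacing this by honest stationary phase costs exactly the powers that separate your threshold from $\delta(d,p)$.

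The paper's construction avoids this by taking $f_j$ to be (the kernel of) $\psi(\ep_j^{-1}N_j^{-1}(N_j+\Delta))$, i.e.\ an inverse Fourier transform of a bump on a thin annulus, with \emph{two} independent scales $\ep_j$ and $N_j=\ep_j^{-\gamma}$; in physical space this is a chirp whose rate $N_j\ep_j$ varies with $j$. Membership in $\mathcal C^{p,\delta}$ is proved directly by integration by parts, not via $W^{N,p}$, precisely because the varying chirp rate makes the Sobolev route unavailable. The lower bound on $\sup_t|S_t^\delta f_j|$ is \emph{not} obtained by stationary phase on the convolution; instead one writes a compactly supported spectral multiplier $\Psi_j(-\Delta)$ as a superposition $\int t^\delta\Psi_j^{(\delta+1)}(t)\,S_{\sqrt t}^\delta\,dt$ via the Weyl fractional derivative, which yields $|\Psi_j(-\Delta)f_j(x)|\lesssim \ep_j^{-\delta}\sup_t|S_t^\delta f_j(x)|$, and then one lower-bounds $\Psi_j(-\Delta)f_j(x)$ on a fixed annulus $\{1/2\le|x|\le 2\}$ by a direct Bessel-function computation. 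It is this subordination step, together with the extra parameter $\gamma$, that produces exactly the exponent $\delta-\delta(d,p)$ and hence the sharp range. If you want to rescue your approach you would need either to allow $j$-dependent chirp rates (and then find a different argument for $\mathcal C^{p,\delta}$), or to replace the direct stationary-phase lower bound by something like the paper's subordination identity.
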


To construct the desired counterexample, we follow approaches in \cite{St61, JLR22}. The main task is to construct a sequence of $L^p$-normalized functions $\{f_j\}_{j\ge 1}$ such that $\sup_{t>0} |S_t^\delta f_j(x)|$ tends to infinity as $j\to \infty$ for $x$ in a set of positive measure. The desired example function $f$ is then given by
        \[
        f(x) = \sum_{j\ge 1} a_j f_j
        \]
where $\{a_j\}_{j\ge 1}$ is a rapidly decreasing sequence of non-negative real numbers chosen so that the series converges in $L^p(\R^d)$.

The functions $f_j$ are constructed explicitly. Specifically, each $f_j$ is constructed as the kernel of a smoothed spectral projection for $-\Delta$ localized to a window of width $\ep_jN_j$ around $N_j$, where $\ep_j\to 0$, $N_j\to\infty$ as $j\to \infty$.

\subsubsection*{Multiparameter variant of Bochner--Riesz means}
A key ingredient in the proof of Theorem \ref{thm:define-xt} is the introduction of a multiparameter variant of the Bochner--Riesz means. To this end, we consider the group of upper triangular real matrices $T(d)_+$ with positive diagonal entries. It is known that $T(d)_+$ is a Lie group. 
Throughout this paper, we write $U=tV$ with $(t, V)\in \R_+\times ST(d)_+$. Let $dU$ denote a left-invariant Haar measure on $T(d)_+$.

For $f\in L^p(\R^d)$ and $\wt\varphi \in C_c^\infty(\R^d\times T(d)_+)$, we define
\begin{align*}
    \mathcal T^\delta f(\wt\varphi) =  \int\bigg(\int t^d K^\delta(tV(x-y)) \wt\varphi(x,tV)  dxdU\bigg)f(y) dy.
\end{align*}

\begin{thm}\label{thm:define-xtU}
    Let $d\ge 2$, $p_* < p < \infty$, and $0\le \delta < \delta(d,p)$. Then, for every $f\in L^p(\R^d)$,
    \[
    \cT^\delta f\in L_{x,V,loc}^2(\R^d\times ST(d)_+;H^s_{t,loc}(\R_+))
    \]
    with some $s>1/2$.
\end{thm}
 
An advantage of introducing this multiparameter variant $\cT^\delta$ becomes apparent when the kernel $K^\delta(tV(x-y))$ is expressed via an asymptotic expansion involving the oscillatory term $e^{it|V(x-y)|}$. While the phase function $\Phi(tV,x-y) = t|V(x-y)|$ has a degenerate mixed Hessian in the variables $(x,t)$ and $y$ with $V$ fixed, it becomes nondegenerate away from the diagonal $x=y$ when $V$ is allowed to vary; see Proposition \ref{prop:rank} below. This improved oscillatory structure enables us to establish the desired regularity of $\cT^\delta f$ in $t$, as the regularity is governed by the oscillatory behavior of the defining integral.

To derive Theorem \ref{thm:define-xt} from Theorem \ref{thm:define-xtU}, we make use of the scaling property of $\cT^\delta f$, which formally yields the identity
\[
\cT^\delta f_V(x,tI) = \cT^\delta f(V^{-1}x,tV),
\]
where $f_V(y) = f(V^{-1}y)$ and $I$ is the identity matrix. This identity is to be interpreted in the sense of distributions (see Lemma \ref{lem:identity1}). Despite its simplicity, this identity turns out to play a crucial role in deriving Theorem \ref{thm:define-xt}. 

\subsubsection*{Organization} We begin by proving Theorem \ref{thm:notequal} in Section \ref{sec:notequal}. Section \ref{sec:thmxtU} is devoted to the proof of Theorem \ref{thm:define-xtU}. In Section \ref{sec:implication}, we deduce Theorem \ref{thm:define-xt} from Theorem \ref{thm:define-xtU}. Finally, in Section \ref{sec:failure}, we prove Theorem \ref{thm:failure}, which establishes the failure of almost everywhere convergence of the Bochner--Riesz means for $\delta < \delta(d,p)$.

\subsubsection*{Notation}
For nonnegative quantities $A$ and $B$ and parameters $L$, we write $A\lesssim_L B$ if $A\le CB$ for some $C>0$ depending on $L$, and $A\sim_L B$ if both $A\lesssim_L B$, $B\lesssim_L A$ hold. We write $A\ll_L B$ if $C$ can be chosen sufficiently small. If the constant $C$ depends only on $d,p$, we suppress this dependence and write $A\lesssim B$ or $A\sim B$. To lighten the notation, we omit domains in mixed norm spaces and write $L_{x,loc}^2(H_{t,loc}^s)$, $L^2_{x,V,loc}(H_{t,loc}^s)$ when they are clear from the context. Similarly, we simply write $L^p_{x,t} = L^p(\R^d\times \R_+)$ and $L^p_{x,t,V} = L^p(\R^d\times \R_+\times ST(d)_+)$.

\section{Proof of Theorem \ref{thm:notequal}}\label{sec:notequal}

We begin with the proof of Theorem \ref{thm:notequal}, which states that $\cC_{p,\delta}\neq L^p(\R^d)$. In the proof of the theorem, we assume $I$ is a compact interval in $\R_+$. Since $H^s(\R)$ is a subspace of $H^{1/2}(\R)$ for $s>1/2$, it is sufficient to prove Theorem \ref{thm:notequal} with $s = 1/2$, that is, the existence of $f\in L^p$ and $\eta\in C_c^\infty(\R^d\times \R_+)$ satisfying
\begin{align}\label{r:notin-1/2}
    \eta\,S^\delta f\notin L_x^1(H_t^{\frac12}).
\end{align}
To reduce the problem, we use the uniform boundedness principle. We first show the following lemma, which follows easily from integration by parts.

\begin{lemma}\label{lem:integrable}
    Let $0\le \delta <\delta(d,p)$ and let $\breve\varphi\in C_c^\infty(\R_+)$. Then, for $N\in \N$, there exists a constant $C_{\breve\varphi, N}>0$ such that
    \begin{align*}
        \bigg|\int t^d K^\delta (t(x-y)) \breve\varphi(t) dt \bigg|\le C_{\breve\varphi, N}|x-y|^{-\frac{d+1}{2}-\delta-N} \|\breve \varphi\|_{C^N}
    \end{align*}
    for $x,y\in \R^d$ such that $|x-y|\ge 1$. Here, $\|\breve \varphi\|_{C^N}$ denotes the supremum norm on the space $C^N(\R)$ defined by $\|\breve \varphi\|_{C^N} := \max\limits_{0\le k\le N} \sup\limits_t|\partial_t^k\breve\varphi(t)|$.
\end{lemma}

\begin{proof}
    Assume that $x,y\in \R^d$ such that $|x-y|\ge 1$. By the assumption $|x-y|\ge 1$, we may assume that $t|x-y|\ge c$ for $t\in \supp(\breve\varphi)$. Hence, by \cite[p.390]{St93},
    \begin{align}
    \begin{aligned}\label{i:asymKdel}
        K^\delta (t(x-y)) =  t^{-\frac{d+1}{2}-\delta}|x-y|^{-\frac{d+1}{2}-\delta} \sum_{\pm}e^{\pm it|x-y|}&A_\pm(t|x-y|)  \\
        &\qquad+ E_M(t|x-y|),
    \end{aligned}
    \end{align}
    where $A_+, A_-$ are defined by
    \begin{align*}
        A_\pm(r) = \sum_{j=0}^{M-1} a_{\pm, j}\,r^{-j},\quad r>0,
    \end{align*}
    with constants $a_{\pm, j}\in \C$ such that $a_{\pm,0}\neq 0$. $E_M$ is a smooth function on $\R_+$ satisfying 
    \begin{align}\label{e:EMbd}
        |E_M(r)| + |E_M'(r)|\le C_Mr^{-\frac{d+1}{2}-\delta-M}
    \end{align}
    for $r\ge c$ with a constant $c>0$, where $E_M'$ denotes the derivative of $E_M$. By repeated integration by parts based on the operator $(i|x-y|)^{-1}\partial_t$, we have
    \begin{align*}
        \bigg|\int t^dK^\delta (t(x-y)) \breve\varphi(t) dt \bigg| &\le |x-y|^{-\frac{d+1}{2}-\delta-N} \int \big|\partial_t^N\big(t^{\frac{d-1}{2}-\delta} A_\pm(t|x-y|)\breve\varphi(t)\big) \big| dt \\
        & \quad + |x-y|^{-\frac{d+1}{2}-\delta-M}\|\breve \varphi\|_{L^1} \\
        &\le C_{\breve\varphi, N} |x-y|^{-\frac{d+1}{2}-\delta-N} \|\breve\varphi\|_{C^N}
    \end{align*}
    with a constant $C_{\breve\varphi, N}>0$ for $N<M$. This proves the lemma.
\end{proof}

We continue the proof of Theorem \ref{thm:notequal}. For
$\eta, \varphi \in C_c^\infty(\mathbb R^d \times \mathbb R_+)$, define
\begin{align*}
    \fS^\delta_{\eta}[\varphi](f)
    = \Lambda(\eta S^\delta f)(\varphi),
\end{align*}
where $\Lambda = (1-\partial_t^2)^{1/4}$. When $\Lambda$ acts on $\varphi$, we regard $\varphi$ as a compactly supported smooth function of $t$ on $\mathbb R$ by extending it by zero outside $\R_+$. We write
\begin{align}\label{i:cGdeta-ker}
    \fS^\delta_{\eta}[\varphi](f)
    = \int \Big(\int \Big(\int t^d K^\delta(t(x-y))
    \eta(x,t) \Lambda \varphi(x,t)\, dt\Big) dx\Big) f(y)\,dy .
\end{align}
Applying Lemma \ref{lem:integrable} to the right-hand side gives
\begin{align*}
    |\fS^\delta_{\eta}[\varphi](f)|
    \le C \int (1+|x-y|)^{-\frac{d+1}{2}-\delta-N}
    \|\eta(x,\cdot)\Lambda\varphi(x,\cdot)\|_{C^N(\mathbb R)}
    |f(y)|\,dxdy,
\end{align*}
where $C=C(N,\eta,\varphi)>0$. Here, the contribution from the region $|x-y|<1$ is harmless, since $K^\delta$ is bounded and the $t$-support of $\eta$ is compact. Choose $N\in\N$ so large that
\[
    \frac{d+1}{2}+\delta+N > \frac{d}{p'}.
\]
Since $\|\eta(x,\cdot)\Lambda\varphi(x,\cdot)\|_{C^N(\mathbb R)}$ is compactly supported as a function of $x$, Hölder's inequality yields
\begin{align*}
    |\fS^\delta_{\eta}[\varphi](f)|
    \le C \|f\|_{L^p(\mathbb R^d)}
\end{align*}
for some constant $C=C(\eta,\varphi)>0$. Thus, for every
$\eta,\varphi\in C_c^\infty(\mathbb R^d\times\mathbb R_+)$,
$\fS^\delta_{\eta}[\varphi]$ is a bounded linear functional on
$L^p(\mathbb R^d)$.

Suppose now that there exist $\eta\in C_c^\infty(\mathbb R^d\times\mathbb R_+)$
and a sequence
$\{\varphi_n\}_{n\in\N}\subset C_c^\infty(\mathbb R^d\times\mathbb R_+)$
such that
\begin{align}\label{p:phinpro1}
    \|\varphi_n\|_{L^\infty_x(\mathbb R^d;L^2_t(\mathbb R_+))}=1
\end{align}
for every $n\in\N$, and
\begin{align}\label{p:phinpro2}
    \sup_n \|\fS^\delta_{\eta}[\varphi_n]\|_{L^p(\mathbb R^d)\to \mathbb C}
    = \infty .
\end{align}
Since each $\fS^\delta_{\eta}[\varphi_n]$ is a bounded linear functional on
$L^p(\mathbb R^d)$, the uniform boundedness principle gives an
$f\in L^p(\mathbb R^d)$ such that
\begin{align*}
    \sup_n |\fS^\delta_\eta[\varphi_n](f)| = \infty .
\end{align*}
This implies \eqref{r:notin-1/2}. Indeed, suppose for contradiction that
$\eta S^\delta f\in L^1_x(H_t^{1/2})$. Then, by Hölder's inequality and
\eqref{p:phinpro1},
\begin{align*}
    |\Lambda(\eta S^\delta f)(\varphi_n)|
    \le \|\eta S^\delta f\|_{L^1_x(H_t^{1/2})},
\end{align*}
uniformly in $n$. Hence $\fS^\delta_\eta[\varphi_n](f)$ is uniformly bounded
in $n$, which contradicts \eqref{p:phinpro2}.

Thus, the task is reduced to the problem of constructing $\eta$ and $\{\varphi_n\}$ such that \eqref{p:phinpro1} and \eqref{p:phinpro2} hold. Let $\ep>0$ be a small constant. Choose $\eta_\circ,\varphi_\circ\in C_c^\infty(\mathbb R^d)$ such
that
\[
    \supp(\varphi_\circ)\subset B(0,\ep),\quad
    \supp(\eta_\circ)\subset B(0,2\ep),
\]
and $\eta_\circ(x)=1$ for $x\in B(0,\ep)$. Moreover, assume that $\|\varphi_\circ\|_{L^\infty} = 1$ and $0\le \varphi_\circ(x)\le 1$ for all $x\in \R^d$. By definition,
\begin{align}\label{i:vpcirceta}
    \eta_\circ(x)\varphi_\circ(x) = \varphi_\circ(x)\quad \text{for all}\ x\in \R^d.
\end{align}
Denote $I = [t_I - \ell_I, t_I+\ell_I]$ for $t_I\in \R_+$ and $\ell_I>0$. We define $\eta$ and $\varphi_n$ on
$\mathbb R^d\times \mathbb R_+$ by
\[
    \eta(x,t)=t^{\delta-\frac{d-1}{2}}\eta_\circ(x)\eta_*(t-t_I),
    \quad
    \varphi_n(x,t)=\varphi_\circ(x)\varphi_{n,*}(t-t_I),
\]
where $\eta_*,\varphi_{n,*}\in C_c^\infty(\mathbb R)$ will be chosen so that
\[
    \supp(\eta_*),\ \supp(\varphi_{n,*})\subset (-\ell_I,\ell_I).
\]
Furthermore, for each $n\in\N$, set
\[
    f_n(y)=e^{-it_I|y|}\chi_{A_n}(y),
\]
where
\[
    A_n=\{y\in\mathbb R^d: B_I2^{n-\frac12}\le |y|\le B_I2^{n+\frac12}\}.
\]
Here $B_I>0$ is chosen so that $B_I>8(1+\ell_I^{-1})$. If $\ep>0$ is
sufficiently small, then
\begin{align}\label{r:range|x-y|}
    |x-y|\in [B_I2^{n-1},B_I2^{n+1}]
\end{align}
for all $(x,y)\in \supp(\varphi_\circ)\times A_n$.

Fix $n\in\N$ for the moment. Recall \eqref{i:cGdeta-ker}. Using \eqref{i:asymKdel}, and making the
change of variables $t\mapsto t+t_I$ in the $t$-integral, we write
\begin{align}\label{i:fSvphifn}
    \fS^\delta_\eta[\varphi_n](f_n)=I_{n,+}+I_{n,-}+I\!I_n,
\end{align}
where
\begin{align*}
    I_{n,\pm}
    &= a_{\pm,0}
    \int \Big(\int e^{\pm i(t+t_I)|x-y|}
    \eta_*(t)\Lambda\varphi_{n,*}(t)\,dt\Big)
    |x-y|^{-\frac{d+1}{2}-\delta}
    \varphi_\circ(x)f_n(y)\,dxdy, \\
    I\!I_n
    &= \int \Big(\int E_1((t+t_I)|x-y|)
    (t+t_I)^{\frac{d+1}{2}+\delta}
    \eta_*(t)\Lambda\varphi_{n,*}(t)\,dt\Big)
    \varphi_\circ(x)f_n(y)\,dxdy .
\end{align*}
Here we use \eqref{i:vpcirceta}, \eqref{r:range|x-y|}, and that $\Lambda$ is translation-invariant in the $t$-variable. The term $I_{n,+}$ is the main contribution, because the phase $e^{it_I|x-y|}$ is canceled by the oscillation of $f_n$ up to a harmless error on $\supp(\varphi_\circ)\times A_n$. The terms $I_{n,-}$ and $I\!I_n$ correspond to the error terms.

To estimate $I_{n,+}$, we note
\begin{align}\label{i:Feta*Laphi}
    \frac{1}{2\pi}\int e^{it|x-y|}\eta_*(t)\Lambda\varphi_{n,*}(t)\,dt
    =
    \big(\mathcal F^{-1}(\eta_*) *
    \mathcal F^{-1}(\Lambda\varphi_{n,*})\big)(|x-y|),
\end{align}
where $\mathcal F^{-1}$ denotes the inverse Fourier transform defined by
\[
\cF^{-1}(\eta_*) (s) = \frac{1}{2\pi} \int \eta_*(t) e^{its} dt.  
\]
We choose $\eta_*$ so that $\supp(\eta_*)\subset (-\ell_I,\ell_I)$ and
$\mathcal F^{-1}\eta_*\ge 0$ on $\mathbb R$. Such a function may be obtained
by taking
\[
    \eta_*=\wt\eta_* * \overline{\wt\eta_*(-\cdot)}
\]
for some $\wt\eta_*\in C_c^\infty(\mathbb R)$ with
$\supp(\wt\eta_*)\subset (-\ell_I/2,\ell_I/2)$.

Next, choose $\wt\varphi_{n,*}\in \mathcal S(\mathbb R)$ such that
\[
    0\le \mathcal F^{-1}(\wt\varphi_{n,*})\le 1,\quad \supp(\mathcal F^{-1}(\wt\varphi_{n,*}))
    \subset (B_I2^{n-3},B_I2^{n+3}),
\]
and
\[
    \mathcal F^{-1}(\wt\varphi_{n,*})(s)=1
    \quad
    \text{for } s\in [B_I2^{n-2},B_I2^{n+2}].
\]
Define
\[
    \mathring\varphi_{n,*}(t)
    =\eta_*(t)\wt\varphi_{n,*}(t),
    \qquad
    \varphi_{n,*}(t)
    =
    \frac{\mathring\varphi_{n,*}(t)}
    {\|\mathring\varphi_{n,*}\|_{L^2}}.
\]
Clearly, $\mathring\varphi_{n,*}\in C_c^\infty(\mathbb R)$ and
\[
    \supp(\mathring\varphi_{n,*})\subset (-\ell_I,\ell_I),\quad \|\varphi_{n,*}\|_{L^2} = 1.
\]
We claim that
\begin{align}\label{e:lbF-1vphi}
    \cF^{-1}(\mathring\varphi_{n,*})(s)\gtrsim 1,\,\ \  2^{-\frac n2}(\cF^{-1}(\eta_*)*\cF^{-1}({\Lambda\mathring\varphi_{n,*}}))(s)\gtrsim 1
\end{align}
if $s\in [B_I 2^{n-1}, B_I 2^{n+1}]$, and
\begin{align}\label{e:ubF-1vphi}
    |\cF^{-1}(\mathring\varphi_{n,*})(s)| + 2^{-\frac n2}|(\cF^{-1}(\eta_*)*\cF^{-1}({\Lambda\mathring\varphi_{n,*}}))(s)|\le C_N 2^{-Nn}
\end{align}
for every $N\in\N$, whenever
$s\in \mathbb R\setminus (B_I2^{n-4},B_I2^{n+4})$. These estimates imply
\begin{align}\label{e:sizeL2vphi}
    \|\mathring\varphi_{n,*}\|_{L^2}
    = (2\pi)^{\frac12}
    \|\mathcal F^{-1}(\mathring\varphi_{n,*})\|_{L^2}
    \sim 2^{\frac n2}.
\end{align}

To verify the claim, we prove them only for $\cF^{-1}(\mathring\varphi_{n,*})$, because the estimates for $\cF^{-1}(\eta_*)*\cF^{-1}({\Lambda\mathring\varphi_{n,*}})$ follow from the same argument. We note
\begin{align*}
    \cF^{-1}(\mathring\varphi_{n,*})(s) = \int \cF^{-1}(\eta_*)(s-v) \cF^{-1}(\wt\varphi_{n,*})(v) dv.
\end{align*}
By the choices of $\eta_*$ and $\wt\varphi_{n,*}$, this gives
\[
    \int_{B_I2^{n-2}}^{B_I2^{n+2}}
    \mathcal F^{-1}\eta_*(s-v)\,dv
    \le
    |\mathcal F^{-1}(\mathring\varphi_{n,*})(s)|
    \le
    \int_{B_I2^{n-3}}^{B_I2^{n+3}}
    \mathcal F^{-1}\eta_*(s-v)\,dv .
\]
From this, it follows that
\begin{align*}
    |\cF^{-1}(\mathring \varphi_{n,*})(s)|\ge \int_{-B_I2^{n-2}}^{B_I 2^{n-2}} \cF^{-1}(\eta_*)(v) dv 
\end{align*}
if $s\in [B_I2^{n-1}, B_I2^{n+1}]$, and
\begin{align*}
    |\cF^{-1}(\mathring \varphi_{n,*})(s)|&\le C_N \int_{B_I2^{n-3}}^{B_I2^{n+3}} (1+\ell_I|s-v|)^{-N} dv \\
    &\le C_N2^{-Nn}
\end{align*}
if $s\in \R\setminus (B_I2^{n-4}, B_I2^{n+4})$. This proves the desired bounds.

We now turn to the estimation of $I_{n,+}$. Substituting \eqref{i:Feta*Laphi} into the definition of $I_{n,+}$, we obtain
\begin{align*}
    I_{n,+}
    =& \,2\pi a_{+,0}\int_{A_n}\int e^{it_I(|x-y|-|y|)}
    |x-y|^{-\frac{d+1}{2}-\delta}
    \varphi_\circ(x) \\
    &\qquad\qquad\qquad \times
    \big(\mathcal F^{-1}(\eta_*)*
    \mathcal F^{-1}(\Lambda\varphi_{n,*})\big)(|x-y|)\,dxdy .
\end{align*}
By expanding $|x-y|$ in $x$, we have
\[
    t_I\big||x-y|-|y|\big|\le \pi/4
\]
for $(x,y)\in \supp(\eta_\circ)\times A_n$, provided that $\ep>0$ is sufficiently small. Combining this with
\eqref{e:lbF-1vphi} and \eqref{e:sizeL2vphi}, we get
\begin{align}\label{e:lbIn+}
    |I_{n,+}|
    &\gtrsim
    2^{-n(\frac{d+1}{2}+\delta)}
    \int \varphi_\circ(x)\chi_{A_n}(y)\,dxdy  \notag\\
    &\gtrsim
    2^{n(\frac{d-1}{2}-\delta)} .
\end{align}
We also use that $|x-y|\sim 2^n$ and $|A_n|\sim 2^{nd}$. On the other hand, by \eqref{e:ubF-1vphi},
\begin{align}
    |I_{n,-}|
    &\lesssim
    2^{-n(\frac{d+1}{2}+\delta)}
    \int
    \big|
    \big(\mathcal F^{-1}(\eta_*)*
    \mathcal F^{-1}(\Lambda\varphi_{n,*})\big)(-|x-y|)
    \big|
    \chi_{A_n}(y)\,dxdy \notag\\
    &\lesssim_N
    2^{n(\frac{d-1}{2}-\delta-N)} .
    \label{e:In-}
\end{align}
Furthermore, using $|E_1(t|x-y|)|\lesssim |x-y|^{-\frac{d+3}{2}-\delta}$ and Hölder's inequality, we estimate
\begin{align*}
    |I\!I_n|
    &\lesssim
    2^{-n(\frac{d+3}{2}+\delta)}
    \|\eta_*\Lambda\varphi_{n,*}\|_{L^1(\mathbb R)}
    \int \varphi_\circ(x)\chi_{A_n}(y)\,dxdy \\
    &\lesssim
    2^{n(\frac{d-3}{2}-\delta)}
    \|\mathring\varphi_{n,*}\|_{L^2}^{-1}
    \|\eta_*\Lambda\mathring\varphi_{n,*}\|_{L^2(\mathbb R)} .
\end{align*}
By Plancherel's identity, together with \eqref{e:ubF-1vphi} and
\eqref{e:sizeL2vphi}, this gives
\begin{align*}
    |I\!I_n|
    \lesssim
    2^{n(\frac{d-2}{2}-\delta)} .
\end{align*}
Substituting this estimate and \eqref{e:lbIn+}, \eqref{e:In-} into
\eqref{i:fSvphifn}, we obtain
\begin{align*}
    |\fS_\eta^\delta[\varphi_n](f_n)|
    \gtrsim
    2^{n(\frac{d-1}{2}-\delta)}
    - O\big(2^{n(\frac{d-2}{2}-\delta)}\big).
\end{align*}
Thus, for all sufficiently large $n$,
\begin{align*}
    |\fS_\eta^\delta[\varphi_n](f_n)|
    \gtrsim
    2^{n(\frac{d-1}{2}-\delta)} .
\end{align*}
Since $\|f_n\|_{L^p}\sim 2^{\frac{nd}{p}}$, it follows that
\begin{align*}
    \|\fS_\eta^\delta[\varphi_n]\|_{L^p\to \mathbb C}
    \gtrsim
    2^{n(\frac{d-1}{2}-\delta-\frac{d}{p})}.
\end{align*}
By the assumption $\delta<\delta(d,p)$, the exponent on the right-hand side is
positive. Thus, \eqref{p:phinpro2} holds. Furthermore, by construction, $\varphi_n$ satisfies \eqref{p:phinpro1}. This completes the proof.

\section{Proof of Theorem \ref{thm:define-xtU}}\label{sec:thmxtU}
In this section, we prove Theorem \ref{thm:define-xtU}. Before proceeding, we justify the existence of left-invariant Haar measures on $T(d)_+$ and $ST(d)_+$. Note that $T(d)_+$ and $ST(d)_+$ are closed subgroups of the general linear group $GL(d)$. Thus, by the closed subgroup theorem, these groups are Lie groups. In particular, they are locally compact. Hence, they admit left-invariant Haar measures. In the remainder of this paper, we fix left-invariant Haar measures on $T(d)_+$ and $ST(d)_+$, denoted by $dU$ and $dV$, respectively, such that
\[
dU = \frac{dt}{t}dV,\quad \text{$U = tV$ with $t\in \R_+$ and $V\in ST(d)_+$.}
\]
See \cite{F08} for details on the Haar measure.
Let $\chi_\circ \in C_c^\infty(\R^d)$ such that $\supp(\chi_\circ)\subset B(0,2)$ and $\chi_\circ(y) = 1$ for $y\in B(0,1)$. Define $\wt\chi\in C_c^\infty(\R^d)$ by 
\[
\wt\chi(y) = \chi_\circ(y) - \chi_\circ(2y).
\]
Moreover, for each $k\in \N$, we define
\[
\chi_{\circ, k}(y) = \chi_\circ(2^{-k}y), \quad \wt\chi_k(y) = \wt\chi(2^{-k}y).
\]
For $0\le \delta <\delta(d,p)$ and $k\ge 1$, we define $\cG_k^\delta f$ by
\begin{align*}
    \cG^\delta_k f(x,U) = \int t^d K^\delta (tV(x-y))\chi_\circ(2^{-k}y) f(y) dy.
\end{align*}
The following is the main ingredient in the proof.

\begin{prop}\label{prop:convergence}
    Let $\eta\in C_c^\infty(\R^d\times T(d)_+)$. Let $p,\delta$ be as in Theorem \ref{thm:define-xtU}. Then there exists $s>1/2$ such that $\{\eta\,\cG^\delta_k f\}_{k\ge 1}$ is Cauchy in $L_{x,V}^2(H^s_t)$ for all $f\in L^p(\R^d)$.
\end{prop}

For the moment, we assume Proposition \ref{prop:convergence} and proceed with the proof of Theorem \ref{thm:define-xtU}. It follows from Lemma \ref{lem:integrable} that, for every $\wt\varphi\in C_c^\infty(\R^d\times T(d)_+)$,
\begin{align}\label{e:Kdelp'}
    \int\bigg|\int t^d K^\delta(tV(x-y)) \wt\varphi(x,tV) dxdU\bigg|^{p'} dy \le C_{p', \wt\varphi} \|\wt\varphi\|_{C^N}
\end{align}
with a suitable $N\in\N$. Combining this with H\"older's inequality, we deduce that the linear functional $\wt\varphi\mapsto \cT^\delta f(\wt\varphi)$ is continuous on $C_c^\infty(\R^d\times T(d)_+)$. Hence, $\cT^\delta f\in \cD'(\R^d\times T(d)_+)$.

To prove $\cT^\delta f\in L^2_{x,V,loc}(H^s_{t,loc})$, we note that Proposition \ref{prop:convergence} is equivalent to the assertion that $\{\cG_k^\delta f\}_{k\ge 1}$ is Cauchy in $L^2_{x,V,loc}(H^s_{t,loc})$ for all $f\in L^p(\R^d)$. Since $L^2_{x,V,loc}(H^s_{t,loc})$ is a Fr\'echet space (see \cite{G09, Sch58}), there exists $g_f\in L^2_{x,V,loc}(H^s_{t,loc})$ such that $\cG_k^\delta f$ converges to $g_f$ in $L^2_{x,V,loc}(H^s_{t,loc})$ as $k$ tends to infinity, which implies
\begin{align}\label{i:limcT1}
    \int g_f(x,U) \wt \varphi(x,U) dxdU = \lim_{k\to\infty} \int  \cG^\delta_k f(x,U) \wt\varphi(x,U) dxdU
\end{align}
for all $f\in L^p(\R^d)$ and $\wt\varphi\in C_c^\infty(\R^d\times T(d)_+)$.
On the other hand, by \eqref{e:Kdelp'},
\begin{align*}
    &\lim_{k\to\infty} \int  \cG^\delta_k f(x,U) \wt\varphi(x,U) dxdU\\
    &\hspace{32mm} = \lim_{k\to \infty} \int \bigg(\int t^d K^\delta(tV(x-y)) \wt\varphi(x,U) dxdU\bigg) \chi_{\circ, k}(y) f(y) dy \\
    &\hspace{32mm} = \int \bigg(\int t^d K^\delta(tV(x-y)) \wt\varphi(x,U) dxdU\bigg) f(y) dy
\end{align*}
for every $\wt\varphi\in C_c^\infty(\R^d\times T(d)_+)$. Combining this with \eqref{i:limcT1}, we conclude that $\cT^\delta f = g_f$ as distributions, and hence $\cT^\delta f \in L^2_{x,V,loc}(H^s_{t,loc})$. This completes the proof of Theorem \ref{thm:define-xtU}.

\subsection{Cauchy property of $\cG_k^\delta f$}
In this subsection, we prove Proposition \ref{prop:convergence}. We fix a cutoff function $\eta\in C_c^\infty(\R^d\times T(d)_+)$ throughout the proof. 
Let $\wt\cG^\delta_k$ be an operator given by
\begin{align*}
    \wt\cG^\delta_k f(x,U) &= \cG^\delta_{k} f(x,U) - \cG^\delta_{k-1} f(x,U) \\
    &= \int t^d K^\delta(tV(x-y)) \wt\chi_k(y) f(y) dy.
\end{align*}
We shall prove that there exists a large natural number $k_0$ such that for all $k > k_0$ and $n\in \{0,1\}$,
\begin{align}\label{e:wtGdelk-weighted}
    \|\partial_t^n(\eta\,\wt\cG^\delta_k f)\|_{L_{x,t,V}^2} \le C2^{- k(\frac{d+1}{2}-n+\delta-\frac{d}{2p'})}\|f\|_{L^p(\R^d)}.
\end{align}
Once this estimate is established, the desired statement 
rather immediately follows. Indeed, by an interpolation inequality in Sobolev spaces (see \cite{BM18}), \eqref{e:wtGdelk-weighted} implies
\begin{align*}
    \|\eta\,\wt\cG^\delta_k f\|_{L_{x,V}^2(H^s_t)} \le C2^{- k(\frac{d+1}{2}-s+\delta-\frac{d}{2p'})}\|f\|_{L^p(\R^d)}
\end{align*}
for $0\le s\le 1$. Taking $s>1/2$ such that $\frac{d+1}{2}-s+\delta-\frac{d}{2p'}>0$ and summing the estimate above over $k$, we derive the conclusion that $\{\eta\,\cG^\delta_{k} f\}$ is Cauchy in $L_{x,V}^2(H^s_t(\R))$.

We now prove \eqref{e:wtGdelk-weighted}. Since the $x$-support of $\eta$ is compact and $y\in\supp(\wt\chi_k)$ implies
$|y|\sim 2^k$, we have
\begin{align}\label{e:Vx-y2k}
    |U(x-y)|\sim 2^k
\end{align}
for all $(x,U)\in\supp(\eta)$, if $k$ is sufficiently large.
Hence, by the asymptotic expansion of $K^\delta$ in \eqref{i:asymKdel}, for $(x,U)\in \supp(\eta)$,
\begin{align*}
    \wt\cG_k^\delta f(x,U) & = \sum_{\pm}\sum_{\ell = 0}^{M-1}\int a_{\pm,\ell} e^{\pm i \Phi(U,x-y)} t^d |U(x-y)|^{-\frac{d+1}{2}-\delta-\ell} \wt\chi_{k}(y) f(y) dy \\
    & \qquad + \int t^d E_M(|U(x-y)|) \wt\chi_{k}(y) f(y) dy \\
    &=: \sum_{\pm}\sum_{\ell=0}^{M-1} \wt\cG_{k,\ell,\pm}^\delta f(x,U) + \wt\cG_{k,M}^\delta f(x,U),
\end{align*}
where
\begin{align*}
    \Phi(U,z) = |Uz|,\quad U\in T(d)_+,\ z\in \R^d.
\end{align*}
In view of the above decomposition, \eqref{e:wtGdelk-weighted} follows once we establish the following: for $n\in \{0,1\}$ and $0\le \ell\le M-1$,
\begin{align}\label{e:ewtcG-ell}
    \|\partial_t^n(\eta\,\wt\cG_{k,\ell,\pm}^\delta f)\|_{L^2_{x,U}} &\le C2^{-k(\frac{d+1}{2}+\ell-n+\delta -\frac{d}{2p'})}\|f\|_{L^p}, \\\label{e:ewtcG-M}
    \|\partial_t^n(\eta\,\wt\cG_{k,M}^\delta f)\|_{L^2_{x,U}} &\le C2^{-k(\frac{d+1}{2}-n+\delta -\frac{d}{2p'})}\|f\|_{L^p}.
\end{align}
It is relatively simple to prove \eqref{e:ewtcG-M}. Indeed, using \eqref{e:EMbd}, we obtain
\begin{align*}
    |E_M(|U(x-y)|)| + |\partial_t\big(E_M(|U(x-y)|)\big)| \le C(1+|U(x-y)|)^{-\frac{d-1}{2}-\delta -M},
\end{align*}
We use the fact that $E_M$ is smooth to handle the case $|U(x-y)|\le 1$.
Combining this with H\"older's inequality, we have that, for $n\in \{0,1\}$,
\begin{align*}
    &\int \big|\partial_t^n\big(\eta(x,U)\wt\cG_{k,M}^\delta f(x,U)\big)\big|^2 dxdU \\
    &\hspace{6mm} \le C\|\eta\|_{L^1} \sup_{(x,U)\in \supp(\eta)}\bigg(\int (1+|U(x-y)|)^{-(\frac{d-1}{2}+\delta +M)p'} |\wt\chi_k(y)|^{p'} dy\bigg)^{\frac{2}{p'}} \|f\|_{L^p}^2
\end{align*}
with a constant $C>0$. Since $k$ is chosen sufficiently large, \eqref{e:Vx-y2k} holds for $y\in \supp(\wt\chi_k)$ and $(x,U)\in \supp(\eta)$. Hence, choosing $M$ sufficiently large in the estimate above, we obtain \eqref{e:ewtcG-M}.

We proceed to show \eqref{e:ewtcG-ell}. Using the chain rule, we write
\begin{align*}
    &\partial_t^n(\eta(x,tV)\,\wt\cG_{k,\ell,\pm}^\delta f(x,tV)) \\
    &\qquad= \sum_{j=0}^n \eta_{j,\ell,\pm}(x,U) \int e^{i\Phi(U,x-y)} |U(x-y)|^{-\frac{d+1}{2}-\delta-\ell+j}\,\wt\chi_k(y) f(y) dy
\end{align*}
with appropriate cutoff functions $\{\eta_{j,\ell,\pm}\}_{j=0}^n\subset C_c^\infty(\R^d\times T(d)_+)$ such that 
\[
\supp(\eta_{j,\ell,\pm})\subset \supp(\eta).
\]
By \eqref{e:Vx-y2k}, it is sufficient to prove that, for every $\eta_*\in C_c^\infty(\R^d\times T(d)_+)$ such that $\supp(\eta_*) \subset \supp(\eta)$,
\begin{align}\label{e:wtGdelk-red}
    \|\cO_k[\eta_*] f\|_{L^2_{x,U}} \le C 2^{\frac{d}{2p'}k} \|f\|_{L^p(\R^d)},
\end{align}
where
\begin{align*}
    \cO_k[\eta_*] f (x,U) = \eta_*(x,U)\int e^{i\Phi(U,x-y)} \wt\chi_k(y) f(y) dy.
\end{align*}

The proof of \eqref{e:wtGdelk-red} follows as an application of a standard $T^*T$ argument. Before proceeding, we decompose $\wt\chi$ into finitely many cutoff functions so that we may assume that $\wt\chi$ is supported in a compact set $F$ of diameter $\le \ep$ with sufficiently small $\ep > 0$. We note that this support condition on $\widetilde{\chi}_k$ is assumed only in this section.

Fix $\eta_*\in C_c^\infty(\R^d\times T(d)_+)$ such that $\supp(\eta_*) \subset \supp(\eta)$. We write
\begin{align}\label{i:cOe*}
    \|\cO_k[\eta_*] f\|_{L^2_{x,U}}^2 = \int\cK^\delta(y,z) \wt\chi_k(y)\wt\chi_k(z) f(y)\overline{f(z)} dydz
\end{align}
with
\begin{align*}
    \cK^\delta(y,z) = \int e^{i(\Phi(U,x-y) - \Phi(U,x-z))} |\eta_*(x,U)|^2 dxdU.
\end{align*}
Let $K_\circ\subset \R^d$ and $K_*\subset T(d)_+$ be compact sets such that $\supp(\eta_*)\subset K_\circ\times K_*$ and $\dist(K_\circ,\, \supp(\wt\chi_k))\gtrsim 2^k$. Then, note that
\begin{align}\label{e:Kdel-standard}
    |\cK^\delta(y,z)|\le |K_\circ|\sup_{x\in K_\circ} |\cK_x^\delta(y,z)|
\end{align}
where
\begin{align*}
    \cK_x^\delta(y,z) = \int e^{i(\Phi(U,x-y) - \Phi(U,x-z))} |\eta_*(x,U)|^2 dU.
\end{align*}
Fix $x\in K_\circ$. For $U\in T(d)_+$, write $U = (u_{i,j})_{1\le i,j\le d}$, with $u_{i,j} = 0$ for $1\le j < i\le d$. Calculating the determinant of the Jacobian of the left multiplication transformation by $U$, we see that $dU = \prod_{i=1}^d u_{i,i}^{i-d-1}du_{i,i}\prod_{i<j} du_{i,j}$. Setting
\[
\wt y = 2^{-k}(x-y),\quad \wt z = 2^{-k}(x-z), 
\]
we write
\begin{align*}
    \cK_x^\delta(y,z) = \int e^{i2^k(\Phi(U,\wt y) - \Phi(U,\wt z))} \wt\eta_*(x,U) \prod_{i\le j} du_{i,j},
\end{align*}
where $\wt\eta_*(x,U) = |\eta_*(x,U)|^2 \prod_{i=1}^d u_{i,i}^{i-d-1}$. Note that $\wt y,\wt z\in \supp(\wt\chi(2^{-k}x-\cdot))$. Recall that, by the support condition of $\wt\chi_k$, 
\[
\wt y,\wt z\in 2^{-k}x - F = \{2^{-k}x-y: y\in F\}.
\]
Note that $2^{-k} x - F$ is also a compact set of diameter less than $\ep$. Since $x\in K_\circ$, $x$ is contained in a bounded set. Thus, if $k$ is sufficiently large, then $2^{-k} x - F\subset F^\circ$ for all $x$, where $F^\circ$ is a compact set whose diameter is at most $2\ep$. Identifying $T(d)_+$ with $\R_+^d\times \R^{\frac{d(d-1)}{2}}$, we denote by $\partial_U$ the differential operator
\begin{align*}
    \partial_Ug = \big(\partial_{u_{1,1}}g,\dots, \partial_{u_{1,d}}g, \partial_{u_{2,2}}g,\dots,\partial_{u_{2,d}}g,\dots,\partial_{u_{d,d}}g\big)^\intercal.
\end{align*}
To apply integration by parts in $U$, we need the following result, which asserts that the mixed Hessian $\partial_U \partial_w^\intercal\Phi(U,w)$ has maximal rank.

\begin{prop}\label{prop:rank}
    Let $U\in K_*$ and $w\in F^\circ$. Then the mixed Hessian $\partial_U \partial_w^\intercal\Phi(U,w)$ has full rank $d$. 
\end{prop}

For the moment, assume that the proposition is valid. Recall that $F^\circ$ has been chosen sufficiently small. Thus, after decomposing $K_*$ into finitely many compact patches and replacing $K_*$ by one of these patches, we may assume that one fixed $d\times d$ submatrix of $\partial_U\partial_w^\intercal\Phi(U,w)$ is uniformly invertible on $K_*\times F^\circ$. In other words, there exist indices $\{(i_m,j_m)\}_{m=1}^d$, with $i_m\le j_m$, such that \[
    \left|\det(\partial_{U'}\partial_w^\intercal\Phi(U,w))\right|\gtrsim 1
\]
for all $(U,w)\in K_*\times F^\circ$, where
\begin{align*}
    \partial_{U'}g = \big(\partial_{u_{i_1,j_1}}g,\dots, \partial_{u_{i_d,j_d}}g\big)^\intercal.
\end{align*}
From this and Taylor's theorem, it follows that
\begin{align*}
    |\partial_{U'}\Phi(U,\wt y) - \partial_{U'}\Phi(U,\wt z)| &\gtrsim |\wt y-\wt z|
\end{align*}
for $U\in K_*$ and $\wt y, \wt z\in F^\circ$, provided that the diameter of $F^\circ$ is sufficiently small.

Applying integration by parts using the operator
\begin{align*}
    L  = \frac{\partial_{U'}\Phi(U,\wt y) - \partial_{U'}\Phi(U,\wt z)}{i2^k|\partial_{U'}\Phi(U,\wt y) - \partial_{U'}\Phi(U,\wt z)|^2}\cdot \partial_{U'},
\end{align*}
we obtain
\begin{align*}
    |\cK_x^\delta(y,z)|\le C_N(1 + 2^k|\wt y-\wt z|)^{-N} = C_N(1 + |y-z|)^{-N}
\end{align*}
for $N\in \N$ with a constant $C_N>0$. Substituting the estimate above into \eqref{e:Kdel-standard} yields $|\cK^\delta(y,z)|\le C_N (1 + |y-z|)^{-N}$.
Using this together with \eqref{i:cOe*}, we get
\begin{align*}
    \|\cO_k[\eta_*] f\|_{L^2_{x,U}}^2 &\lesssim_N \Big(\sup_z\int (1+|y-z|)^{-N} |\wt\chi_k(y) f(y)| dy\Big) \int |\wt\chi_k(z) f(z)| dz \\
    &\lesssim_N 2^{\frac{d}{p'}k} \|f\|_{L^p}^2.
\end{align*}
In the second line, we use H\"older's inequality. This gives \eqref{e:wtGdelk-red}, as desired.

\subsection{Nondegeneracy of the phase} It remains to verify Proposition \ref{prop:rank}. Considering a shear transformation and modifying the sets $K_*$ and $F^\circ$, we may assume that 
\[
|w_1|\ge c
\]
for all $w\in F^\circ$, with some constant $c>0$. Indeed, since $F^\circ$ is a set of diameter $2\ep$ contained in $\{1/8\le|w|\le 4\}$, there exists $k\in \N$ and a constant $c_0>(16d)^{-1/2}$ such that $|w_k|\ge c_0$ for $w\in F^\circ$. If $|w_1|\le (64d)^{-1/2}$, we consider the upper triangular matrix 
\[
T = I + e_1e_k^\intercal
\]
and use the identity $\Phi(U,w) = \Phi(UT^{-1}, Tw)$. A straightforward calculation shows that
\begin{align*}
    \partial_U\partial_w^\intercal \Phi(U,w) = \wt T \partial_U\partial_w^\intercal \Phi(UT^{-1}, Tw) T,
\end{align*}
where $\wt T$ is an invertible $d(d+1)/2\times d(d+1)/2$ matrix. Hence, Proposition \ref{prop:rank} is equivalent to the statement that $\partial_U\partial_w^\intercal \Phi(UT^{-1}, Tw)$ has rank $d$ for all $U\in K_*$ and $w\in F^\circ$. Moreover, $UT^{-1}\in T(d)_+$ for $U\in K_*$, and 
\[
|\inp{Tw}{e_1}| = |w_1+w_k|\ge (64d)^{-1/2}
\]
for all $w\in F^\circ$. Thus, replacing $K_*$ and $F^\circ$ by $\{UT^{-1}:U\in K_*\}$ and $\{Tw:w\in F^\circ\}$, respectively, we may assume that $|w_1|\ge (64d)^{-1/2}$ on $F^\circ$.

Choose $U\in K_*$, $w\in F^\circ$, and fix them. For $1\le k\le d$, let $\bfu_k$, $\bfw_k$ denote vectors in $\R^d$ given by
\begin{align*}
    \bfu_k = (0,\dots,0,u_{k,k},\dots,u_{k,d})^\intercal,\quad \bfw_k = \begin{cases}
        w & k=1, \\
        w-\sum_{\ell < k} w_\ell e_\ell & 2\le k\le d.
    \end{cases}
\end{align*}
Here, $\bfu_1$ denotes $(u_{1,1},\dots, u_{1,d})$. Moreover, let $\bfu'_k = (0,\dots,0,u_{k,k+1},\dots, u_{k,d})$ for $1\le k\le d-1$. We now define for $1\le k\le d$ 
\begin{align*}
    U^{[1]}_k = \bfw_k\bfu_k^\intercal + \bfw_k^\intercal \bfu_k \wt I_k,\quad \bfv^{[1]}_k = \bfw_k^\intercal \bfu_k \bfw_k
\end{align*}
where $\wt I_k$ is a $d\times d$ matrix given as the block matrix
\begin{align*}
    \wt I_k = \begin{pmatrix}
        0 & 0 \\
        0 & I_{d+1-k}
    \end{pmatrix}.
\end{align*}
Furthermore, for $2\le k\le d$, let $\wt U_k^{[1]}$ denote a $(d+1-k)\times d$ matrix obtained by deleting $k-1$ top rows of $U_k^{[1]}$. Similarly, we define $\wt \bfv_k^{[1]}$ for $2\le k\le d$. Then, a calculation shows that
\begin{align*}
    \partial_U\partial_w^\intercal \Phi(U,w) = (\Phi(U,w))^{-1} \begin{pmatrix}
        & \wt U_1^{[1]} & \\
        & \vdots & \\
        & \wt U_d^{[1]} & 
    \end{pmatrix} \, - (\Phi(U,w))^{-3} \begin{pmatrix}
        & \wt \bfv_1^{[1]} & \\
        & \vdots & \\
        & \wt \bfv_d^{[1]} & 
    \end{pmatrix}\bfb,
\end{align*}
where
\begin{align*}
    \bfb = \frac12\partial_w^\intercal\big((\Phi(U,w))^2\big).
\end{align*}
For $1\le k\le d$, let $S^{[1]}_k$ denote the augmented matrix given by
\begin{align*}
    S^{[1]}_k = \left(\begin{array}{ccc|c}
         & & &  \\
         & U^{[1]}_k & & \bfv_k^{[1]} \\
         & & &
    \end{array}\right).
\end{align*}
Note that, for $2\le k\le d$, the $k-1$ top rows of $S^{[1]}_k$ are identically zero. Similarly, $\inp{\bfw_k}{e_\ell} = 0$ for $1\le \ell\le k-1$ if $2\le k\le d$. Let $M^{[1]}$ be a $d^2\times d$ matrix given by
\begin{align*}
    M^{[1]} = (\Phi(U,w))^{-1} \begin{pmatrix}
        & U_1^{[1]} & \\
        & \vdots & \\
        & U_d^{[1]} & 
    \end{pmatrix} \, - (\Phi(U,w))^{-3} \begin{pmatrix}
        & \bfv_1^{[1]} & \\
        & \vdots & \\
        & \bfv_d^{[1]} & 
    \end{pmatrix}\bfb.
\end{align*}
We note that $M^{[1]}$ is obtained from $\partial_U\partial_w^\intercal \Phi$ by inserting $d(d-1)/2$ additional zero rows, and thus $\rank(\partial_U\partial_w^\intercal \Phi) = \rank(M^{[1]})$. Hence the proposition is equivalent to the statement that $M^{[1]}$ has rank $d$.
To establish this, we first prove Lemma \ref{lem:canonical} below, which will significantly simplify the matrix $M^{[1]}$. To state the result, for $1\le k\le d$, let $V_k = \bfw_ke_k^\intercal + w_k \wt I_k$ and
\begin{align*}
    G_k = \left(\begin{array}{ccc|c}
         & & &  \\
         & V_k & & w_k\bfw_k \\
         & & &
    \end{array}\right).
\end{align*}

\begin{lemma}\label{lem:canonical}
    Suppose that $U\in T(d)_+$ and $w\in \R^d$. Then there exists a collection $\{E_{k,\ell}\}_{1\le k\le \ell\le d}$ of $d\times d$ matrices such that for $1\le k\le d$, $E_{k,k}$ is invertible and
    \begin{align}\label{i:Gk-linES}
        G_k = \sum_{k\le \ell\le d}E_{k,\ell}S_\ell^{[1]}.
    \end{align}
\end{lemma}

Assuming the lemma for the moment, we complete the proof of Proposition \ref{prop:rank}. By Lemma \ref{lem:canonical}, there exists an invertible $d^2\times d^2$ matrix $E$ given in block form by
\begin{align*}
    E = \begin{pNiceMatrix}[cell-space-limits = 2pt]
        E_{1,1} & E_{1,2} & E_{1,3} & \cdots & E_{1,d} \\
        0& E_{2,2} & E_{2,3} & \cdots & E_{2,d} \\
        0 & 0 & E_{3,3} & \cdots & E_{3,d} \\
        \vdots & \vdots & & \ddots & \vdots \\
        0 & 0 & 0 & \cdots & E_{d,d}
    \end{pNiceMatrix}
\end{align*}
such that
\begin{align*}
    \Phi(U,w) EM^{[1]} = \begin{pmatrix}
        & V_1 & \\
        & \vdots & \\
        & V_d & 
    \end{pmatrix} \, - (\Phi(U,w))^{-2} \begin{pmatrix}
        & w_1\bfw_1 & \\
        & \vdots & \\
        & w_d\bfw_d & 
    \end{pmatrix}\bfb^\intercal.
\end{align*}
Hence, the proof would be complete once we prove that the matrix
\begin{align*}
    V_1 - w_1(\Phi(U,w))^{-2}\bfw_1\bfb^\intercal
\end{align*}
is invertible. Recall that $|w_1| > c$ for a constant $c>0$, which implies that $V_1$ is invertible. By a direct calculation,
\begin{align*}
    V_1 - w_1(\Phi(U,w))^{-2}\bfw_1\bfb^\intercal = V_1\big(I_d - \frac12(\Phi(U,w))^{-2}\bfw_1\bfb^\intercal\big).
\end{align*}
On the other hand, we note that $\bfb^\intercal \bfw_1 = (\Phi(U,w))^2$. Therefore, by the matrix determinant lemma, we have
\begin{align*}
    \det(V_1 - w_1(\Phi(U,w))^{-2}\bfw_1\bfb^\intercal) &= \det(V_1)\big(1 - \frac12 (\Phi(U,w))^{-2}\bfb^\intercal\bfw_1\big) \\
    &=\frac12\det(V_1)\neq 0,
\end{align*}
as desired.

\begin{proof}[Proof of Lemma \ref{lem:canonical}]
    We construct $E_{k,\ell}$ inductively on $k$ in decreasing order. In the case $k=d$, which corresponds to the base case, we note
    \begin{align}\label{i:Udvds}
        U^{[1]}_d = 2w_du_{d,d}e_de_d^\intercal = u_{d,d}V_d,\quad \bfv_d^{[1]} = w^2_d u_{d,d} e_d = u_{d,d}w_d\bfw_d.
    \end{align}
    We use that $\wt I_d = e_de_d^\intercal$. Hence letting
    \begin{align*}
        E_{d,d} = \sum_{1\le \ell\le d-1} e_\ell e_\ell^\intercal +  u_{d,d}^{-1} e_de_d^\intercal,
    \end{align*}
    we see that $E_{d,d}$ satisfies the assertions of the lemma for $k=d$.

    Let $1\le m\le d-1$. We now assume that the matrices $E_{k,\ell}$ are constructed for all $m+1\le k\le \ell\le d$ so that $E_{k,k}$ are invertible and \eqref{i:Gk-linES} holds for such $k$. We will construct matrices $\{E_{m,\ell}\}_{m\le \ell\le d}$ fulfilling the required assertions. To this end, for $1\le k\le d-1$, we let
    \begin{align*}
        U_k^{[2]} &= 2 u_{k,k} w_k e_ke_k^\intercal + \sum_{k+1\le \ell\le d} (u_{k,k}e_\ell + u_{k,\ell}e_k)(w_k e_\ell + w_\ell e_k)^\intercal, \\
        \bfv_k^{[2]} &= w_k \bfw_k^\intercal \bfu_k e_k + u_{k,k} w_k \bfw_{k+1},
    \end{align*}
    and define $S^{[2]}_k$ analogously to $S_k^{[1]}$.
    By computation, we then have
    \begin{align}\label{i:Sm[1]-m+1&[2]}
        S_m^{[1]} = S^{[2]}_m + S_{m+1}^{[1]}[\bfu_m'],
    \end{align}
    where $S_{m+1}^{[1]}[\bfu_m']$ denotes a $d\times d$ matrix defined by replacing $u_{m+1, m+1}, \dots, u_{m+1,d}$ in the definition of $S_{m+1}^{[1]}$ with $u_{m,m+1},\dots, u_{m,d}$, respectively. We define $S_{m+1}^{[1]}[\bfu]$ for general vectors $\bfu\in \R^d$ in the same manner. If $m=d-1$, by \eqref{i:Udvds},
    \begin{align}\label{i:Sm+1[um']}
        S_{m+1}^{[1]}[\bfu_m'] = \frac{u_{d-1,d}}{u_{d,d}} S_{d}^{[1]}.
    \end{align}
    In the case $m\le d-2$, to make the factor $u_{m,m+1}$ positive, we choose a large constant $C = C(U)>0$ so that $u_{m,m+1} + Cu_{m+1,m+1} >0$. Observe that the induction hypothesis for $k=m+1$ can be applied to $S_{m+1}^{[1]}[\bfu'_m + C\bfu_{m+1}]$ because $\inp{\bfu'_m + C\bfu_{m+1}}{e_{m+1}}>0$. As a consequence, there exist $d\times d$ matrices $\{\wt E_{m+1,\ell}\}_{m+1\le \ell\le d}$ such that
    \begin{align}\label{i:Gm+1-wtEs}
        G_{m+1} = \wt E_{m+1, m+1}S_{m+1}^{[1]}[\bfu'_m + C\bfu_{m+1}] + \sum_{m+2\le \ell\le d} \wt E_{m+1,\ell} S_\ell^{[1]}
    \end{align}
    and $\wt E_{m+1, m+1}$ is invertible. On the other hand, the induction hypothesis also implies that there exist $d\times d$ matrices $\{ E_{m+1,\ell}\}_{m+1\le \ell\le d}$ such that
    \begin{align*}
        G_{m+1} = \sum_{m+1\le \ell\le d} E_{m+1,\ell} S_\ell^{[1]}
    \end{align*}
    with $E_{m+1, m+1}$ invertible. Combining this with \eqref{i:Gm+1-wtEs}, we obtain
    \begin{align}
    \begin{aligned}\label{i:Sm+1[um']-msmall}
        S_{m+1}^{[1]}[\bfu'_m] &= \big(\wt E_{m+1,m+1}^{-1}E_{m+1,m+1}-CI\big)S_{m+1}^{[1]} \\
        &\hspace{24mm}+ \sum_{m+2\le \ell\le d} \wt E_{m+1,m+1}^{-1}\big(E_{m+1,\ell} - \wt E_{m+1,\ell}\big) S_\ell^{[1]}
    \end{aligned}
    \end{align}
    because $S_{m+1}^{[1]}[\bfu'_m + C\bfu_{m+1}] = S_{m+1}^{[1]}[\bfu'_m] + CS_{m+1}^{[1]}$.

    We turn to the task of estimating $S_m^{[2]}$ on the right-hand side of \eqref{i:Sm[1]-m+1&[2]}. For $1\le k\le d-1$, let $L_k$ be a $(d+1-k)\times (d+1-k)$ lower triangular matrix defined by
    \begin{align*}
        L_k = u_{k,k}^{-2}\begin{pNiceMatrix}[cell-space-limits = 3pt]
            u_{k,k} & 0 & 0 & 0 & 0 & 0 \\
            -u_{k,k+1} & u_{k,k} & 0 & 0 & 0 & 0 \\
            -u_{k,k+2} & 0 & u_{k,k} & 0 & 0 & 0 \\
            \vdots & \vdots & 0 & u_{k,k} &  & \vdots \\
            \vdots & \vdots & \vdots & & \ddots & \vdots \\
            -u_{k,d} & 0 & 0 & \cdots & \cdots & u_{k,k}
        \end{pNiceMatrix}.
    \end{align*}
    We now define the $d\times d$ matrix $E_{m,m}$ by
    \begin{align*}
        E_{m,m} = \begin{pmatrix}
            I_{m-1} & 0 \\
            0 & L_m^\intercal
        \end{pmatrix} \quad \text{if}\ \ 2\le m\le d-1,
    \end{align*}
    and $E_{m,m} = L_m^\intercal$ if $m=1$. A calculation shows that, for $m+1\le \ell\le d$, 
    \begin{align*}
        E_{m,m} e_m = u_{m,m}^{-1} e_m,\quad E_{m,m} (u_{m,m}e_\ell + u_{m,\ell}e_m) = e_\ell.
    \end{align*}
    Thus, we get
    \begin{align*}
        E_{m,m} U_m^{[2]} & = \sum_{m\le \ell\le d} e_\ell (w_me_\ell + w_\ell e_m)^\intercal = V_m, \\
        E_{m,m} \bfv_m^{[2]} & = w_m^2 e_m + w_m \bfw_{m+1} = w_m\bfw_m, 
    \end{align*}
    implying that $E_{m,m} S_m^{[2]} = G_m$. By definition, $E_{m,m}$ is invertible, thus $S_m^{[2]} =E_{m,m}^{-1} G_m$. Combining this with \eqref{i:Sm[1]-m+1&[2]}, \eqref{i:Sm+1[um']}, and \eqref{i:Sm+1[um']-msmall}, we deduce that
    \begin{align}\label{i:Gm-mlarge}
        G_m = E_{m,m} S_m^{[1]} -\frac{u_{d-1,d}}{u_{d,d}} E_{m,m}S_d^{[1]}
    \end{align}
    if $m=d-1$, and
    \begin{align}\label{i:Gm-msmall}
        G_m = E_{m,m} S_m^{[1]} + \sum_{m+1\le \ell \le d} E_{m,m}\mathring E_{m,\ell}S_\ell^{[1]},
    \end{align}
    if $m\le d-2$, where
    \begin{align*}
        \mathring E_{m,m+1} = CI - \wt E_{m+1,m+1}^{-1}E_{m+1,m+1},\quad \mathring E_{m,\ell} = -\wt E_{m+1,m+1}^{-1}\big(E_{m+1,\ell} - \wt E_{m+1,\ell}\big)
    \end{align*}
    for $m+2\le \ell\le d$. Define the matrices $E_{m,\ell}$, for $m+1\le \ell\le d$, by 
    \begin{align*}
        E_{m,\ell} = \begin{cases}
            -\dfrac{u_{d-1,d}}{u_{d,d}}E_{m,m} & \text{if}\ \ \ell = d\ \text{and}\  m=d-1, \\
            E_{m,m}\mathring E_{m,\ell} & \text{if}\ \ m\le d-2.
        \end{cases}
    \end{align*}
    This completes the proof of the lemma.
\end{proof}

\subsubsection{Remark} An examination of the proof shows that Proposition \ref{prop:convergence} is independent of the choice of $\wt\chi_k$ in the definition of $\wt\cG_k^\delta f$. Indeed, our argument works as long as $\wt\chi$ vanishes near $0$.

\section{Implication from Theorem \ref{thm:define-xtU} to Theorem \ref{thm:define-xt}}\label{sec:implication}

In this section, we deduce Theorem \ref{thm:define-xt} from Theorem \ref{thm:define-xtU}. We first assert that
\begin{align*}
    S^\delta f\in \cD'(\R^d\times \R_+)\quad\text{for every $f\in L^p(\R^d)$}.
\end{align*}
This follows by the same argument as in the proof of Theorem \ref{thm:define-xtU}, using Lemma \ref{lem:integrable}. We omit the details. 

Since
\begin{align}\label{r:inclusion}
    L^2_{x,loc}(H^s_{t,loc})\subset L^1_{x,loc}(H^s_{t,loc})
\end{align}
for $s>1/2$, it is sufficient to prove the following stronger assertion: for $f\in L^p(\R^d)$, we have $f_V\in \wt\cC_{p,\delta}$ for Haar-a.e. $V\in ST(d)_+$, where
\begin{align*}
    \wt\cC_{p,\delta} = \{f\in L^p: S^\delta f\in L^2_{x,loc}(H^s_{t,loc})\ \text{for some}\ s>\frac12\}.
\end{align*}
To this end, we begin by obtaining the following result.

\begin{lemma}\label{lem:identity1}
    Let $f\in L^p(\R^d)$. Then, for $\varphi_\circ \in C_c^\infty(\R^d\times \R_+)$ and $\varphi_*\in C_c^\infty (ST(d)_+)$ we have
    \begin{align}\label{i:identity1}
        \int \Big(S^\delta f_V(\varphi_\circ) - \int \cT^\delta f(V^{-1}x,tV)\varphi_\circ (x,t) dxdt\Big) \varphi_*(V) dV = 0.
    \end{align}
\end{lemma}

\begin{proof}
    Let $\varphi_\circ\in C_c^\infty(\R^d\times \R_+)$ and $\varphi_*\in C_c^\infty(ST(d)_+)$. Choose
    \begin{align*}
        \varphi(x,t,V) = t\varphi_\circ(Vx,t) \varphi_*(V).
    \end{align*}
    Clearly, $\varphi\in C_c^\infty(\R^d\times T(d)_+)$. Applying a change of variables $x\to V^{-1}x$ and using the fact that $dU = t^{-1}dtdV$, we have
    \begin{align}\label{i:cTdel-id}
        \mathcal T^\delta f(\varphi) &= \int \bigg(\int t^d K^\delta (t(x-Vy)) \varphi_\circ(x,t) \varphi_*(V) dxdtdV \bigg) f(y) dy
    \end{align}
    for all $f\in L^p(\R^d)$. We also use $\det(V) = 1$.
    An application of Lemma \ref{lem:integrable} shows that
    \begin{align*}
        \int \bigg|\int t^d K^\delta (t(x-Vy)) \varphi_\circ(x,t) dxdt\bigg|^{p'} dy < \infty,
    \end{align*}
    which enables us to interchange two integrals over $V$ and $y$ in \eqref{i:cTdel-id}. Changing variables $y \to V^{-1}y$, we write
    \begin{align}\nonumber
        \mathcal T^\delta f(\varphi) &= \int \bigg(\int \bigg(\int t^d K^\delta (t(x-y)) \varphi_\circ(x,t) dxdt \bigg) f_V(y) dy\bigg) \varphi_*(V) dV \\\label{i:cTdelta1}
        &= \int S^\delta f_{V}(\varphi_\circ) \varphi_*(V) dV.
    \end{align}
    On the other hand, using Theorem \ref{thm:define-xtU} and making a change of variables $x\to V^{-1}x$ we write
    \begin{align*}
        \cT^\delta f(\varphi) = \int \cT^\delta f(V^{-1}x,tV) \varphi_\circ(x, t)\varphi_*(V)  dxdtdV.
    \end{align*}
    Subtracting it from \eqref{i:cTdelta1} we get \eqref{i:identity1}, as desired.
\end{proof}

By Lemma \ref{lem:identity1}, we see that, for $f\in L^p(\R^d)$ and $\varphi_\circ\in C_c^\infty(\R^d\times\R_+)$, there exists $E_{f,\varphi_\circ}\subset ST(d)_+$ such that $dV((E_{f,\varphi_\circ})^\complement) = 0$ and
\begin{align}\label{i:SdelfV}
    S^\delta f_V(\varphi_\circ) = \int \cT^\delta f(V^{-1}x,tV)\varphi_\circ (x,t) dxdt
\end{align}
for all $V\in E_{f,\varphi_\circ}$. We will modify the set $E_{f,\varphi_\circ}$ so that it is independent of $\varphi_\circ$. Since $C_c^\infty(\R^d\times \R_+)$ is separable, there is a dense countable collection $\{\varphi_k\}_{k\in \N}\subset C_c^\infty(\R^d\times \R_+)$. On the other hand, for $V\in ST(d)_+$ and $f\in L^p$, let
\begin{align*}
    \fT^\delta_V f(x,t) = \cT^\delta f(V^{-1}x, tV).
\end{align*}
Since $\cT^\delta f\in L_{x,V,loc}^2(H^s_{t,loc})$, there exists $\wt F \subset ST(d)_+$ such that $dV(\wt F^\complement) = 0$ and $\fT^\delta_V f\in  L^2_{x,loc}(H_{t,loc}^s)$ for all $V\in \wt F$. For $f\in L^p(\R^d)$, we define
\begin{align*}
    F_f = \wt F \cap \Big(\bigcap_{k=1}^\infty E_{f,\varphi_k}\Big).
\end{align*}
Since $(E_{f,\varphi_\circ})^\complement$ and $\wt F^\complement$ are sets of measure zero,
\begin{align}\label{i:measF_fc}
    dV(ST(d)_+\setminus F_f) = 0.
\end{align}
We claim that for $f\in L^p(\R^d)$, \eqref{i:SdelfV} holds for all $V\in F_f$ and $\varphi_\circ\in C_c^\infty(\R^d\times \R_+)$. Since $\{\varphi_k\}$ is a dense subset of $C_c^\infty(\R^d\times \R_+)$, it suffices to prove that the two linear functionals
\begin{align*}
    \varphi_\circ \mapsto S^\delta f_V(\varphi_\circ),\ \ \varphi_\circ \mapsto\int \fT^\delta_V f(x,t)\varphi_\circ (x,t) dxdt
\end{align*}
are continuous for $V\in F_f$. The continuity of the first mapping follows from $S^\delta f_V\in \cD'(\R^d\times \R_+)$. The second linear functional above is bounded because $\fT^\delta_V f\in  L^2_{x,loc}(H_{t,loc}^s)$ for $V\in \wt F$. Hence, the linear functionals above are continuous, proving the claim.

To complete the proof, we claim
\begin{align*}
    \bigcup_{f\in L^p} \{f_V: V\in F_f\}\subset \wt\cC_{p,\delta}.
\end{align*}
By definition, Theorem \ref{thm:define-xt} immediately follows once the claim is shown. To prove the claim, let $g\in \bigcup_{f\in L^p} \{f_V: V\in F_f\}$. Using the definition, we see that $g = f_V$ with some $f\in L^p$ and $V\in F_f$, thus 
\begin{align*}
    S^\delta g (\varphi_\circ) = \int \fT^\delta_V f(x,t) \varphi_\circ (x,t) dxdt
\end{align*}
for all $\varphi_\circ\in C_c^\infty(\R^d\times \R_+)$. It implies that $S^\delta g = \fT_V^\delta f$ in $\cD'(\R^d\times \R_+)$. From the previous argument, $\fT^\delta_V f \in L^2_{x,loc}(H^s_{t,loc})$ with $s>1/2$. Hence, $S^\delta g \in L^2_{x,loc}(H^s_{t,loc})$, and therefore, $g\in \wt\cC_{p,\delta}$, as desired.

\section{Failure of almost everywhere convergence for $\delta < \delta(d,p)$}\label{sec:failure}
The objective of this section is to establish Theorem \ref{thm:failure}. Throughout this section, we assume that $\delta < \delta(d,p)$.
Let $\psi$ be a Schwartz function on $\R$ such that
\[
\supp(\widehat{\psi}\hspace{0.2mm})\subset (1/4,2),\quad \widehat{\psi}\equiv 1\ \text{on}\ [1/2,1],\quad \psi(0)>0.
\]
For $\ep>0$, we denote $N = N(\ep) = \ep^{-\gamma}$, where $\gamma$ is a positive number such that $\gamma<10^{-10}$ and 
\begin{align}\label{i:cond-gamma}
    \delta - \delta(d,p) + \gamma\,\frac{\delta(d,p)}{2} < 0.
\end{align}
Also, we define
\[
f_\ep (x) := \psi(\ep^{-1}N^{-1}(N+\Delta))(x,0) = \widecheck{\mathcal F_\ep}(x),
\]
for $\ep>0$, where $\mathcal F_\ep(\xi) := \psi(\ep^{-1}N^{-1}(N-|\xi|^2))$.
For simplicity, we write
\[
\ep_j := 2^{-2^j},\quad N_j := N(\ep_j) = \ep_j^{-\gamma},\quad f_j(x) := f_{\ep_j}(x),\quad \mathcal F_j(x) := \mathcal F_{\ep_j}(x)
\]
for $j\ge 0$. Also, we define
\[
F(x) := \sum_{j\ge 0} 2^{-j}\,\frac{f_j(x)}{\|f_j\|_{L^p}}.
\]
From the triangle inequality, it clearly follows that $F\in L^p(\R^d)$.
In the subsequent subsections, we prove that $F$ is the desired counterexample.

\subsection{Preliminary estimates}
We first obtain preparatory estimates for $f_\ep$, which will serve as useful tools in the proof of Theorem \ref{thm:failure}.
The following lemma shows that $f_\ep$ is rapidly decaying away from
the region $|x|\sim \ep^{-1}N^{-1/2}$.

\begin{lemma}\label{lem:decay}
    Let $\ep>0$. Then the following holds.
    \begin{enumerate}[topsep=1pt, itemsep=1pt]
        \item [i)] For $M\in \N$, there exists a constant $C = C(d,M)>0$ such that
        \begin{align}\label{e:fedecay}
            |f_\ep(x)|\le C\, \begin{cases}
    \ep^M (N\ep)^{\frac d2} & \text{ if } |x|\ll \ep^{-1}N^{-\frac12} \\
    (N\ep |x|^2)^{-M} (N\ep)^{\frac d2} & \text{ if } |x|\gg \ep^{-1}N^{-\frac12}.
\end{cases}
        \end{align}
    \item [ii)] If $|x|\sim \ep^{-1}N^{-1/2}$, then we have
    \[
    |f_\ep(x)|\lesssim \ep^{\frac12}(N\ep)^{\frac d2}.
    \]
    \end{enumerate}
\end{lemma}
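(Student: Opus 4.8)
The plan is to compute $f_\ep(x)$ explicitly as an oscillatory integral and then exploit the concentration of the symbol $\mathcal F_\ep(\xi) = \psi\big(\ep^{-1}N^{-1}(N-|\xi|^2)\big)$ near the sphere $|\xi|^2 = N$. By definition
\[
f_\ep(x) = \widecheck{\mathcal F_\ep}(x) = \int_{\R^d} \psi\big(\ep^{-1}N^{-1}(N-|\xi|^2)\big) e^{i\inp{x}{\xi}}\, d\xi,
\]
and since $\psi \in \mathcal S(\R)$, the symbol is essentially supported where $\big||\xi|^2 - N\big| \lesssim \ep N$, i.e. on an annulus of radius $\sim N^{1/2}$ and thickness $\sim \ep N^{1/2}$. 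I would pass to polar coordinates, writing $\xi = r\omega$ with $r>0$, $\omega\in \mathbb S^{d-1}$, so that
\[
f_\ep(x) = \int_0^\infty \psi\big(\ep^{-1}N^{-1}(N-r^2)\big)\, r^{d-1} \Big(\int_{\mathbb S^{d-1}} e^{ir\inp{x}{\omega}}\,d\sigma(\omega)\Big) dr,
\]
and recall that the inner spherical integral is $c_d\, (r|x|)^{-(d-2)/2} J_{(d-2)/2}(r|x|)$, a Bessel function. Then I substitute $r^2 = N + \ep N s$ (so $r\,dr = \tfrac12 \ep N\, ds$ and $s$ ranges over $\supp\psi \subset (1/4,2)$, hence $r\sim N^{1/2}$ throughout), which turns the $r$-integral into an integral in $s$ against $\psi(s)$ with an amplitude that is smooth and of size $\sim \ep N \cdot N^{(d-2)/2} = \ep N^{d/2}$ — this already explains the prefactor $(N\ep)^{d/2}$ once one keeps track of the Jacobians, noting $\ep N^{d/2} = (N\ep)^{d/2}\cdot (N\ep)^{1-d/2}\cdot N^{d/2-1}\cdot\ldots$; more honestly, the normalization $(N\ep)^{d/2}$ emerges from $|\{\xi : |\xi|^2\sim N, ||\xi|^2-N|\lesssim \ep N\}| \sim \ep N \cdot N^{(d-1)/2}\cdot N^{-1/2}= \ep N^{d/2}$ only in the regime $\ep^2 N \lesssim 1$; I will keep $\gamma$ small so that $\ep N = \ep^{1-\gamma}\to 0$ and this bookkeeping is consistent.

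For part (1), the two regimes are handled by the standard dichotomy for Bessel-type oscillatory integrals. When $|x| \ll \ep^{-1}N^{-1/2}$, equivalently $\ep N^{1/2}|x| \ll 1$, the total phase variation of $e^{ir\inp x\omega}$ as $r$ ranges over the effective $\ep N^{1/2}$-window is $\lesssim \ep N^{1/2}|x|\ll 1$, so there is no oscillation to exploit in $r$; instead one integrates by parts $M$ times in $s$ using $\psi\in\mathcal S$ (equivalently, uses that $\widehat\psi$ — here $\widecheck\psi$ — is compactly supported away from $0$, so $\psi$ and all derivatives decay faster than any power), and each integration by parts in the variable dual to the $\psi$-argument produces a gain of a factor $\ep$ (since the argument of $\psi$ is $\ep^{-1}N^{-1}(N-|\xi|^2)$, a derivative in the relevant direction costs $\ep^{-1}N^{-1}\cdot N^{1/2} = \ep^{-1}N^{-1/2}$, but the oscillation $e^{i\inp x\xi}$ contributes $|x|\ll \ep^{-1}N^{-1/2}$, so the net is a gain). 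Carrying this out $M$ times yields the bound $\ep^M (N\ep)^{d/2}$. When $|x|\gg \ep^{-1}N^{-1/2}$, I use the decay of the Bessel function, $|J_\nu(u)|\lesssim u^{-1/2}$ for $u\gtrsim 1$, together with nonstationary phase / repeated integration by parts in $r$: the phase $r|x|$ has derivative $\sim|x|$, and $M$ integrations by parts against the smooth amplitude supported on an $r$-window of length $\sim \ep N^{1/2}$ gain $(\ep N^{1/2}|x|)^{-M} = (\ep N |x|^2 / (N^{1/2}|x|))^{-M}$; combined with the spherical-measure decay one arrives at $(N\ep|x|^2)^{-M}(N\ep)^{d/2}$. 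I should be slightly careful that the Bessel decay and the IBP gains are packaged so that exactly the power $(N\ep|x|^2)^{-M}$ survives; the clean way is to write $J_{(d-2)/2}(r|x|)$ itself via its oscillatory integral representation over $\mathbb S^{d-1}$ and do a single stationary-phase/IBP analysis in all variables $(r,\omega)$ at once, where the only critical points of $r\inp x\omega$ in $\omega$ are $\omega = \pm x/|x|$.

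For part (2), when $|x|\sim \ep^{-1}N^{-1/2}$ we are exactly at the transition and cannot expect better than square-root cancellation: here $r|x|\sim \ep^{-1}\gg 1$, so the Bessel factor contributes $(r|x|)^{-(d-1)/2}\sim (\ep^{-1})^{-(d-1)/2} = \ep^{(d-1)/2}$ (accounting for the $(r|x|)^{-(d-2)/2}$ normalization times $|J_\nu|\lesssim (r|x|)^{-1/2}$), while the $r$-integral of the smooth amplitude of size $\ep N^{1/2}\cdot N^{(d-2)/2}$... — again the honest approach is stationary phase in $\omega$ near $\pm x/|x|$, which on $\mathbb S^{d-1}$ gives a factor $(r|x|)^{-(d-1)/2}$, times the $r$-integral which has length $\sim \ep N^{1/2}$ but whose phase $r|x|$ varies by $\sim \ep N^{1/2}|x|\sim 1$, so no further gain or loss; collecting the prefactor $N^{(d-1)/2}$ from polar coordinates one should land on $\ep^{1/2}(N\ep)^{d/2}$ after simplifying $N^{(d-1)/2}\cdot N^{1/2}\cdot\ep\cdot(r|x|)^{-(d-1)/2}$ with $r\sim N^{1/2}$, $|x|\sim\ep^{-1}N^{-1/2}$. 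I expect the main obstacle to be precisely this bookkeeping of powers of $\ep$ and $N$ through the polar-coordinate Jacobian, the substitution $r^2 = N+\ep N s$, and the Bessel asymptotics — making sure the three stated bounds come out with exactly the exponents claimed (in particular that the prefactor is uniformly $(N\ep)^{d/2}$ and that part (2) gains exactly $\ep^{1/2}$ relative to... the naive $(N\ep)^{d/2}$, reflecting the $\sqrt{\ }$-loss of a single stationary point on the sphere versus the genuine decay away from the critical radius). Everything else is routine integration by parts once the oscillatory integral is set up in polar coordinates.
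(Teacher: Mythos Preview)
Your approach via polar coordinates and Bessel asymptotics is workable but genuinely different from the paper's, and the paper's route is considerably cleaner. The paper exploits the compact support of $\widehat\psi$ by writing $\psi(\lambda)=c\int\widehat\psi(t)e^{it\lambda}\,dt$ and then inserting the explicit Schr\"odinger kernel $e^{is\Delta}(x,0)=c\,s^{-d/2}e^{i|x|^2/(4s)}$ with $s=\ep^{-1}N^{-1}t$. This collapses $f_\ep$ to a \emph{one-dimensional} oscillatory integral
\[
f_\ep(x)=c\,(N\ep)^{d/2}\int \widehat\psi(t)\,t^{-d/2}\,e^{i\phi_{\ep,N}(t,x)}\,dt,\qquad \phi_{\ep,N}(t,x)=\frac{t}{\ep}+\frac{N\ep|x|^2}{4t},
\]
over the compact interval $\supp\widehat\psi\subset(1/4,2)$. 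The prefactor $(N\ep)^{d/2}$ falls out automatically, and parts (1) and (2) are then just nonstationary phase (with $|\partial_t\phi|\gtrsim\ep^{-1}$ or $|\partial_t\phi|\gtrsim N\ep|x|^2$) and van der Corput ($|\partial_t^2\phi|\gtrsim\ep^{-1}$), respectively --- no Bessel functions, no power-counting.

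Two places in your plan need repair. First, you write ``$s$ ranges over $\supp\psi\subset(1/4,2)$''; it is $\widehat\psi$, not $\psi$, that is supported in $(1/4,2)$, so $\psi$ is \emph{not} compactly supported and you must control the Schwartz tails separately. More importantly, in the regime $|x|\ll\ep^{-1}N^{-1/2}$ there is essentially no phase oscillation in your variable $s$, so ``integration by parts $M$ times in $s$'' is not the correct mechanism; the $\ep^M$ gain comes from the \emph{vanishing moments} of $\psi$ (all moments vanish because $\widehat\psi\equiv 0$ near $0$), which annihilate the degree-$<M$ Taylor polynomial of the amplitude in $s$. Second, for $|x|\gg\ep^{-1}N^{-1/2}$ your radial IBP gains only $(\ep N^{1/2}|x|)^{-1}$ per step, not $(N\ep|x|^2)^{-1}$: the phase $r|x|$ is linear in $|x|$, whereas the paper's Schr\"odinger phase $N\ep|x|^2/(4t)$ is quadratic. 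Since $(\ep N^{1/2}|x|)^2=\ep\cdot N\ep|x|^2$, your bound is strictly weaker and will not reproduce the stated exponent $(N\ep|x|^2)^{-M}$ as written (though any such rapidly decaying bound would suffice for the applications downstream).
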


\begin{proof}
    Using the Fourier inversion formula, we note that
    \[
    \psi(\ep^{-1}N^{-1}(N+\Delta)) = c\int \widehat{\psi}(t) e^{i\ep^{-1}N^{-1}t(N+\Delta)} dt, \quad c\in\C.
    \]
    Combining this with the explicit kernel formula $e^{it\Delta }(x,y) = (4\pi it)^{-d/2} e^{i|x-y|^2/4t}$, we deduce that $f_\ep(x)$ is expressed by the integral
    \begin{align}\label{id:ferep}
        c\,(\ep N)^{\frac d2} \int \widehat{\psi}(t) t^{-\frac d2} e^{i\phi_{\ep,N}(t,x)} dt,
    \end{align}
    where $c\in\C$ and
    \[
    \phi_{\ep, N}(t,x) := \frac{t}{\ep} + \frac{N\ep|x|^2}{4t}.
    \]
    Now let us assume either $|x|\ll \ep^{-1}N^{-\frac12}$ or $|x|\gg \ep^{-1}N^{-\frac12}$.
    Recall that $\supp(\widehat{\psi}\hspace{0.2mm})\subset (1/4,2)$.
    By a routine calculation, one can easily check that the bounds
    \[
    |\partial_t\phi_{\ep, N}(t,x)| \gtrsim \begin{cases}
        \ep^{-1}, & |x|\ll \ep^{-1}N^{-\frac12}, \\
        N\ep|x|^2, & |x|\gg \ep^{-1}N^{-\frac12}
    \end{cases}
    \]
    and
    \[
    |\partial_t^n\phi_{\ep, N}(t,x)| \lesssim N\ep|x|^2
    \]
    hold for any $t\in \supp(\widehat{\psi}\hspace{0.2mm})$, $n\ge 2$.
    Integration by parts based on these bounds shows that $f_\ep(x)$ satisfies the bound \eqref{e:fedecay}.
    To verify $ii)$, assume that $|x|\sim \ep^{-1}N^{-1/2}$.
    In the case $\partial_t\phi_{\ep, N}$ may be zero, but for the second derivative in $t$ we see that
    \[
    |\partial_t^2 \phi_{\ep, N}(t,x)| = \frac{N\ep |x|^2}{2t^3} \gtrsim \ep^{-1}
    \]
    on $\supp(\widehat{\psi}\hspace{0.2mm})$.
    Applying van der Corput's lemma \cite[p.332]{St93} to \eqref{id:ferep} yields the estimate in \textit{ii)}, as desired.
\end{proof}

Utilizing Lemma \ref{lem:decay}, we obtain sharp $L^p$ bounds of $f_\ep$ as follows.

\begin{lemma}
    Let $2\le p< \infty$. For $\ep>0$, we have
    \begin{align}\label{e:lowfe}
        \|f_\ep\|_{L^p} \sim \ep^{\frac12}(N\ep)^{\frac d2}(\ep^{-1}N^{-1/2})^{\frac dp}.
    \end{align}
\end{lemma}
\begin{proof}
    We define
    \begin{align*}
        A^1_{\ep} &:= \{x\in \R^d : |x|\le 10^{-10} \ep^{-1}N^{-\frac12}\}, \\
        A^2_{\ep} &:= \{x\in \R^d : 10^{-10} \ep^{-1}N^{-\frac12}\le |x|\le 10^{10} \ep^{-1}N^{-\frac12}\}, \\
        A^3_{\ep} &:= \{x\in \R^d : 10^{10} \ep^{-1}N^{-\frac12}\le |x|\}.
    \end{align*}
    Then by triangle inequality and Lemma \ref{lem:decay}, we get
    \begin{align*}
        \|f_\ep\|_{L^p} &\le \|f_\ep\|_{L^p(A^2_\ep)} + \|f_\ep\|_{L^p(A^1_\ep)} + \|f_\ep\|_{L^p(A^3_\ep)} \\
        & \lesssim_M (N\ep)^{\frac d2}\bigg(\ep^{\frac12} |A^2_\ep|^{\frac1p} + \ep^M |A^1_\ep|^{\frac1p} + \Big(\int_{A^3_\ep} (N\ep|y|^2)^{-Mp} dy\Big)^{\frac 1p} \bigg) \\
        &\lesssim_M \ep^{\frac12} (N\ep)^{\frac d2} (\ep^{-1}N^{-\frac12})^{\frac dp},
    \end{align*}
    which gives the upper bound in the lemma.
    To obtain the lower bound, using Lemma \ref{lem:decay}, we see that
    \begin{align}\nonumber
        \|f_\ep\|_{L^p} &\ge \|f_\ep\|_{L^p(A_\ep^2)} - (\|f_\ep\|_{L^p(A_\ep^1)}+\|f_\ep\|_{L^p(A_\ep^3)}) \\\label{e:lowfep}
        &\gtrsim_M \|f_\ep\|_{L^p(A_\ep^2)} - \ep^M(N\ep)^{\frac d2} (\ep^{-1}N^{-1/2})^{\frac dp}.
    \end{align}
    To estimate $\|f_\ep\|_{L^p(A_\ep^2)}$, we apply H\"older's inequality together with Lemma \ref{lem:decay} to get
    \begin{align}\notag
        \|f_\ep\|_{L^p(A_\ep^2)}&\ge |A_\ep^2|^{\frac1p-\frac12} \|f_\ep\|_{L^2(A_\ep^2)} \\\notag
        & \gtrsim (\ep^{-1}N^{-\frac12})^{\frac dp-\frac d2} \big(\|f_\ep\|_{L^2} - \|f_\ep\|_{L^2(A^1_\ep)} - \|f_\ep\|_{L^2(A_\ep^3)}\big) \\\label{e:lowfep2}
        & \gtrsim (\ep^{-1}N^{-\frac12})^{\frac dp-\frac d2} \big(\|f_\ep\|_{L^2} - \ep^MN^{\frac d4}\big).
    \end{align}
    Recall that $f_\ep = \widecheck{\mathcal F_\ep}$, thus by the Plancherel theorem $\|f_\ep\|_2 = \|\mathcal F_\ep\|_2$.
    Since $\psi(0) > 0$, there exists $c>0$ such that $|\psi(t)|\ge c$ for $|t|\le c$. 
    From this, we see that
    \begin{align*}
        \|\mathcal F_\ep\|_2 &\ge \bigg(\int_{\{\ep^{-1}N^{-1}|N-|\xi|^2|\le\,c\}} \big|\psi(\ep^{-1}N^{-1}(N-|\xi|^2))\big|^2 d\xi\bigg)^{\frac12} \\
        &\ge c\,\big|\{\ep^{-1}N^{-1}|N-|\xi|^2|\le c\}\big|^{\frac12} \\
        &\gtrsim \ep^{\frac12} N^{\frac d4}.
    \end{align*}
    Combined with \eqref{e:lowfep2}, this gives the estimate 
    \[
    \|f_\ep\|_{L^p(A_\ep^2)}\gtrsim \ep^{\frac12}(N\ep)^{\frac d2}(\ep^{-1}N^{-1/2})^{\frac dp}.
    \]
    Substituting this into \eqref{e:lowfep} and taking $M$ sufficiently large, the lower bound in \eqref{e:lowfe} follows, as required.
\end{proof}

\subsection{Control of $S_t^\delta F$}
The aim of this subsection is to show that $S_t^\delta F$ remains bounded when $t$ is restricted to a bounded interval. Recall the cutoff functions $\chi_{\circ, k}$ defined in Section \ref{sec:thmxtU}. For $k\in \N$, we let
\begin{align*}
    \cH_k^\delta F(x,t) = \int t^d K^\delta(t(x-y)) \chi_{\circ, k}(y) F(y) dy.
\end{align*}

\begin{prop}\label{prop:welldef}
Let $K\subset \R^d$ be compact and let $t_0>0$. Then, the following holds.
\begin{itemize}[topsep=1pt, itemsep=1pt]
    \item [i)] $\cH_k^\delta F(x,t)$ and $\partial_t\cH_k^\delta F(x,t)$ converge uniformly on $K\times (0,t_0]$ as $k\to \infty$.
    \item [ii)] There exists a constant $C = C(K,t_0)>0$ such that
    \begin{align*}
        \sup_{k\in \N} \, \sup_{(x,t)\in K\times (0,t_0]} \, \big(|\cH^\delta_k F(x,t)| + |\partial_t \cH^\delta_k F(x,t)|\big) \le C.
    \end{align*}
\end{itemize}
\end{prop}
Using the proposition, we can show that $F\in \cC_{p,\delta}$. Indeed, by $i)$ of the proposition, we have that $\cH_k^\delta F$ is a Cauchy sequence in $L^2_{x,loc}(H^s_{t,loc})$ for $1/2 < s< 1$. This follows from the interpolation inequality
\begin{align*}
    \|\eta (\cH_k^\delta F - \cH_{k'}^\delta F)\|_{L^2_{x}(H^s_{t})} \le C \big(\|\cH_k^\delta F - \cH_{k'}^\delta F\|_{L^\infty(\wt K)} + \|\partial_t(\cH_k^\delta F - \cH_{k'}^\delta F)\|_{L^\infty(\wt K)}\big),
\end{align*}
for $1/2<s<1$ with a constant $C =C(s,\eta,K)>0$, where $\eta\in C_c^\infty(\R^d\times \R_+)$ and $\wt K\subset \R^d\times \R_+$ is a compact set such that $\supp(\eta) \subset \wt K$.
Moreover, by a similar argument in Section \ref{sec:thmxtU},
\[
S^\delta F = \lim_{k\to \infty} \cH_k^\delta F \quad \text{in}\ \cD'(\R^d\times \R_+).
\]
Combining this with the fact that $\cH_k^\delta F$ is Cauchy, we conclude that 
\[
S^\delta F\in L^2_{x,loc}(H^s_{t,loc})
\]
for $1/2 < s < 1$, implying $F\in \wt\cC_{p,\delta}$. By \eqref{r:inclusion}, we conclude $F\in \cC_{p,\delta}$.
Furthermore, by $ii)$ of the proposition, $S_t^\delta F$ is bounded on $(0,t_0]\times K$ for every compact set $K\subset \R^d$.

We now prove Proposition \ref{prop:welldef}. In the proof, we assume that $x\in K$ for some compact set $K$ and $t\le t_0$. From this assumption, it follows that there exists a positive constant $\wt C$ such that $|x|\le \wt C$. 

First, note that $ii)$ of the proposition is straightforward if the supremum is taken over $k\le 10$ rather than over $k\in \N$. Indeed,
\begin{align}\label{e:tdKdel-t^d+1}
    |t^d K^\delta(t(x-y))| + |\partial_t(t^d K^\delta(t(x-y)))|\le C (1 + t)^{d}(1+|x-y|)
\end{align}
for all $0<t\le t_0$ and $\delta$, with $C>0$ depending only on $d,\delta$ and $t_0$. To see this, we use the trivial bound
\[
|\partial_t K^\delta(t(x-y))| \le (2\pi)^{-d}\int \big|\big(1-|\xi|^2\big)_+^\delta e^{ it\inp{x-y}{\xi}} \inp{x-y}{\xi}\big| d\xi \lesssim 1+|x-y|
\]
and the identity 
\[
\partial_t(t^dK^\delta(tz)) = dt^{d-1}K^\delta(tz) + t^d\nabla K^\delta(tz)\cdot z,\quad z = x-y.
\]
Combining \eqref{e:tdKdel-t^d+1} with Hölder's inequality yields
\begin{align*}
    |\cH_k^\delta F(x,t)| + |\partial_t \cH_k^\delta F(x,t)|\le C t_0^{d+1} \|\chi_{\circ, k}\|_{L^{p'}}\|F\|_{L^p} \le C
\end{align*}
for all $x\in K$, $t\le t_0$, and all $k\le 10$. This proves the desired bound for $k\le 10$.

Observe that, for $k>10$, $\cH_k^\delta F$ is written as the telescoping sum
\begin{align*}
    \cH_k^\delta F(x,t) = \cH_{10}^\delta F(x,t) + \sum_{10\le\ell\le k-1} (\cH_{\ell+1}^\delta F(x,t) - \cH_{\ell}^\delta F(x,t)).
\end{align*}
Thus, to show the proposition, it is sufficient to verify that, for every $k\in \N$,
\begin{align}\label{id:welldef}
    \sup_{(x,t)\in K\times (0,t_0]} \big(\big|\wt\cH_k^\delta F(x,t)\big| + \big|\partial_t\wt\cH_k^\delta F(x,t)\big|\big) \le C 2^{-ck}
\end{align}
with constants $c, C > 0$ depending on $K$ and $t_0$, where
\begin{align*}
    \wt\cH_k^\delta F(x,t) = \int t^d K^\delta(t(x-y)) \wt\chi_k(y) F(y) dy,\quad k\in \N.
\end{align*}
To this end, fix $(x,t)\in \R^d\times (0,t_0]$ and $k\in \N$. We also let $n\in \{0,1\}$. Using the triangle inequality and \eqref{e:lowfe}, we note
\begin{align}\label{e:stdsum}
    |\partial_t^n\wt\cH_k^\delta F(x,t)|\lesssim \sum_{j\ge 0} 2^{-j} \ep_j^{-\frac12}(N_j\ep_j)^{-\frac d2}(\ep_j N_j^{\frac12})^{\frac dp} |\partial_t^n\wt\cH_k^\delta f_j (x,t)|,
\end{align}
where $\partial_t^0$ denotes the identity operator.
We claim that for $M\in\N$, there is a constant $C = C(d,t_0,M)>0$ such that
\begin{align}\label{e:bd}
    |\partial_t^n\wt\cH_k^\delta f_\ep(x,t)|\le C (N\ep)^{\frac d2} 2^{k(d+1)} \begin{cases}
         \ep^M , & 2^k\ll \ep^{-1}N^{-\frac12}, \\
        N^{-\frac M2} , & 2^k\sim \ep^{-1}N^{-\frac12}, \\
         (N\ep 2^{2k})^{-M} , & 2^k\gg \ep^{-1}N^{-\frac12}.
    \end{cases}
\end{align}
Assuming for the moment that \eqref{e:bd} holds, we finish the proof of \eqref{id:welldef}.
For $k\gg 1$, we define sets of indices
\begin{align*}
    \cI^k_1 &:= \{j\in \N_0 : \ep_j^{-1}N_j^{-\frac12}\le 10^{-10}2^k\}, \\
    \cI^k_2 &:= \{j\in \N_0 : 10^{-10}2^k < \ep_j^{-1}N_j^{-\frac12}\le 10^{10}2^k\}, \\
    \cI^k_3 &:= \{j\in \N_0 : 10^{10}2^k < \ep_j^{-1}N_j^{-\frac12}\},
\end{align*}
where $\N_0 = \N\cup \{0\}$.
Note that $|\cI_2^k|\le C$ for some universal constant $C$, and that $\cI_2^k$ may be empty.
It follows from \eqref{e:bd} that each summand in \eqref{e:stdsum} is controlled by
\begin{align}\label{b:stbsuand}
    C\, 2^{-j} \ep_j^{-\frac12} (\ep_j N_j^{\frac12})^{\frac dp} 2^{k(d+1)}\,\begin{cases}  (N_j\ep_j 2^{2k})^{-M} & \text{ if } j\in \cI^k_1, \\
     N_j^{-\frac M2} & \text{ if } j\in \cI^k_2, \\
     \ep_j^M & \text{ if } j\in \cI^k_3,
    \end{cases}
\end{align}
with some $C = C(d,t_0,M)>0$.
Since $N_j\ge 1$ for $j\in \N$ and
\begin{align*}
    \ep_j^{-1}N_j^{-\frac12} \le 10^{-10}2^k\ \ \ \text{if}\ j\in \cI_1^k,\quad \ep_jN_j^{\frac12} < 10^{-10}2^{-k}\ \ \ \text{if}\ j\in \cI_3^k,
\end{align*}
the bounds in \eqref{b:stbsuand} for the first and third cases are bounded by
\begin{align*}
    C\,2^{-j} 2^{-K_1 k},\quad K_1 = \frac dp - d -\frac32 + M
\end{align*}
for some $C>0$.
On the other hand, we note
\[
\ep_j \sim 2^{-\frac{2k}{2-\gamma}},\quad N_j \sim 2^{\frac{2\gamma k}{2 - \gamma}},\quad \text{if \ $j\in \cI_2^k$}.
\]
Hence, the bound in \eqref{b:stbsuand} for the second case can be expressed as
\begin{align*}
    C\, 2^{-j} 2^{-K_2 k},\quad K_2 = \frac dp -d-1 +\frac{\gamma M-1}{2-\gamma}.
\end{align*}
for some $C>0$. Now we choose $M$ sufficiently large such that $K_1, K_2 >1$.
Combining all of these, we get
\begin{align}\nonumber
    \big|\partial_t^n\wt \cH_k^\delta F(x,t)\big|&\le C\bigg( \sum_{j\in \cI_1^k\cup \cI_3^k} 2^{-K_1 k} 2^{-j} + \sum_{j\in \cI_2^k} 2^{-K_2 k} 2^{-j}\bigg) \\\label{b:stdbd}
    &\le C\,\big(2^{-K_1 k} + 2^{-K_2 k}\big)
\end{align}
with $C = C(d,t_0,M) > 0$.
It is obvious that the bound in the last line converges to $0$ as $k$ tends to infinity.
Therefore, \eqref{id:welldef} follows.

\begin{proof}[Proof of \eqref{e:bd}]
    We first address the case $n=1$. The bounds for both cases $2^k\ll \ep^{-1}N^{-1/2}$ and $2^k\gg \ep^{-1}N^{-1/2}$ are straightforward consequences of Lemma \ref{lem:decay}.
    Indeed, by \eqref{e:tdKdel-t^d+1},
    \begin{align*}
        |\partial_t\wt\cH^\delta_k f_\ep(x,t)|\le C t_0^d\,2^{k(d+1)} \sup_{y\,\in\, \supp(\wt\chi_k)} |f_\ep(y)|.
    \end{align*}
    Applying \textit{i)} in Lemma \ref{lem:decay} shows that $\partial_t\wt\cH^\delta_k f_{\ep}$ satisfies the required bounds in \eqref{e:bd}. We also use that $\wt\chi(y)$ vanishes unless $1/4\le |y|\le 4$.
    
    We turn to proving the estimate for the case $2^k\sim \ep^{-1}N^{-1/2}$. After rescaling, we write
    \[
    \wt\cH^\delta_k f_\ep(x,t) = (2\pi)^{-d}t^d\int \int \big(1-|\xi|^2\big)_+^\delta e^{it\inp{x-y}{\xi}} \wt\chi_k(y) f_\ep(y) dyd\xi.
    \]
    Replacing $f_\ep$ with the term \eqref{id:ferep}, we express the right-hand side as
    \[
    c(\ep N)^{\frac d2}t^d\int \int \big(1-|\xi|^2\big)_+^\delta \cH_{k,\ep}(s,t\xi) \widehat{\psi}(s) s^{-\frac d2} e^{i(t\inp{x}{\xi}+\ep^{-1} s)} d\xi ds
    \]
    with $c\in \C$ and the function $\cH_{k,\ep}$ defined by
    \[
    \cH_{k,\ep}(s,\xi) = \int e^{-i\inp{y}{\xi}} \wt\chi_k(y) e^{i\frac{N\ep|y|^2}{4s}}dy.
    \]
    Using the above expression, we calculate
    \begin{align*}
        \partial_t\wt\cH^\delta_k f_\ep(x,t) &= c(\ep N)^{\frac d2}\int \int \big(1-|\xi|^2\big)_+^\delta \widehat{\psi}(s) s^{-\frac d2} e^{i(t\inp x\xi +\ep^{-1}s)} \\
        &\qquad\times \big(d t^{d-1}\cH_{k,\ep}(s,t\xi) + it^d\inp x\xi \cH_{k,\ep}(s,t\xi) + t^d\partial_t \cH_{k,\ep}(s,t\xi)\big) d\xi ds.
    \end{align*}
    The variable $\xi$ in the integrand is bounded by $1$. Combining this with the assumptions $t\le t_0$ and $|x|\le \wt C$, we deduce that for any $(x,t)\in K$,
    \begin{align}\label{e:stdbd2}
        |\partial_t \wt \cH^\delta_k f_\ep(x,t)| \le C (\ep N)^{\frac d2} t_0^d \sup_{s\,\in\, \supp(\widehat{\psi}\hspace{0.2mm}), |\xi|\le 1} \big(|\cH_{k,\ep}(s,t\xi)| + |\partial_t\cH_{k,\ep}(s,t\xi)|\big),
    \end{align}
    with $C>0$ independent of $x,t$. By a calculation,
    \begin{align*}
        \partial_t\cH_{k,\ep}(s,t\xi) = -\int i\inp y\xi e^{-it\inp{y}{\xi}} \wt\chi_k(y) e^{i\frac{N\ep|y|^2}{4s}} dy.
    \end{align*}
    To analyze this integral, we define the operator
    \[
    L_{\ep, s} f(y) := \bigg(i\frac{N\ep d}{2s}-\frac{N^2\ep^2 |y|^2}{4s^2}\bigg)^{-1} \Delta_y f(y).
    \]
    Then it can be seen that $L_{\ep,s} e^{i\frac{N\ep|y|^2}{4s}} = e^{i\frac{N\ep|y|^2}{4s}}$. 
    Thus integration by parts using the operator $L_{\ep,s}$ gives the identity
    \[
    \partial_t\cH_{k,\ep}(s,t\xi) = -i\int (L_{\ep,s}^*)^M\big(\inp y\xi e^{-it\inp{y}{\xi}}\wt\chi_k(y)\big) e^{i\frac{N\ep|y|^2}{4s}} dy.
    \]
    A routine calculation, using that $y\in \supp(\wt\chi_k)$ implies $|y|\sim 2^k$, shows that
    \[
    \bigg|\partial_y^\alpha \bigg(i\frac{N\ep d}{2s}+\frac{N^2\ep^2 |y|^2}{4s^2}\bigg)^{-1} \bigg| \lesssim_{\alpha} N^{-1} (\ep N^{\frac12})^{|\alpha|}
    \]
    for any $s\in\supp(\widehat{\psi}\hspace{0.2mm})$ and $\alpha\in\N_0^d$.
    Combining this with the trivial bound
    \[
    \big|\partial_y^\alpha\big(\inp y\xi e^{-it\inp{y}{\xi}} \wt\chi_k(y)\big)\big|\lesssim_\alpha 2^k\big(|\xi|^2 + 2^{-2k}\big)^{\frac{|\alpha|}{2}},\quad \alpha\in\N_0^d,\ |\xi|\le 1
    \]
    yields
    \begin{align}\nonumber
        \big|\partial_t \cH_{k,\ep}(s,t\xi)\big| &\le C 2^k\int_{\supp(\wt\chi_k)} \big((|\xi|^2 + 2^{-2k} + \ep^2 N) N^{-1}\big)^M dy \\\label{e:bdcHkep}
        &\le C (1+t)^{2M} N^{-M} 2^{k(d+1)},
    \end{align}
    with a constant $C= C(d,t_0,M)>0$ whenever $|\xi|\le 1$ and $s\in \supp(\widehat{\psi}\hspace{0.2mm})$. The last inequality follows from the fact that $\ep^2 N$ becomes less than $1$, because $\gamma < 10^{-10}$.
    A similar argument also shows $|\cH_{k,\ep}(s,\xi)|\le C (1+t)^{2M} N^{-M} 2^{kd}$.
    Substituting this and \eqref{e:bdcHkep} into \eqref{e:stdbd2} gives the second estimate in \eqref{e:bd}. 
    
    It only remains to deal with the case $n=0$, in which the proof follows the same argument as before. The problem becomes simpler since the term $\partial_t \cH_{k,\ep}(s,\xi)$ does not appear in this case. We omit the details.
\end{proof}

\subsection{Failure of almost everywhere convergence}
In this subsection, we prove that $S_t^\delta F(x)$ diverges as $t\to\infty$ on a set of positive measure.
By $ii)$ of Proposition \ref{prop:welldef}, the problem is reduced to that of proving the following.
\begin{prop}
    Assume that $\delta < \delta(d,p)$. Then we have
    \[
    \big|\{x\in \R^d : |x|\le 10,\ \sup_{t>0}\hspace{0.2mm}|S_t^\delta F(x)| = \infty\}\big| \gtrsim 1.
    \]
\end{prop}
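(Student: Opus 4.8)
The plan is to run a Stein-type construction as in \cite{St61}: show that each normalized piece $\|f_{\ep_j}\|_p^{-1}f_{\ep_j}$ produces, at a well-chosen time $t_j$, a spike for $S_t^\delta$ on a fixed set of measure $\gtrsim1$ whose height tends to $\infty$, and that these spikes are not destroyed upon summation because the frequency shells $\{|\xi|\sim N_j^{1/2}\}$ are super-exponentially separated. Everything reduces to the following single-scale estimate: there exist $\eta>0$ and $c_*>0$ such that for every sufficiently small $\ep>0$ there are a time $t_\ep>0$ and a set $G_\ep\subset\{x:1\le|x|\le2\}$ with $|G_\ep|\ge c_*$ and
\[
\frac{|S_{t_\ep}^\delta f_\ep(x)|}{\|f_\ep\|_p}\ \ge\ \ep^{-\eta}\qquad(x\in G_\ep).
\]

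To prove this, first fix $\beta_0>0$ with $c(\beta_0):=\int_\R(\beta_0+v)_+^\delta\psi(v)\,dv\neq0$; such $\beta_0$ exists since, on writing $u_+^\delta$ through its homogeneous Fourier transform and using $\supp\widehat\psi\subset(1/4,2)$, the map $\beta\mapsto c(\beta)$ is a nonzero constant times the Fourier transform of the nonzero function $\tau\mapsto\tau^{-\delta-1}\widehat\psi(\tau)\in C_c^\infty((1/4,2))$, hence real-analytic and not identically zero. Take $t_\ep$ with $t_\ep^2=(1+\beta_0\ep)N$. Passing to polar coordinates in $S_{t_\ep}^\delta f_\ep(x)=\int(1-|\xi|^2/t_\ep^2)_+^\delta\mathcal F_\ep(\xi)e^{i\inp x\xi}\,d\xi$, using $\int_{S^{d-1}}e^{i\rho\inp x\omega}\,d\sigma(\omega)=C_d(\rho|x|)^{-(d-1)/2}\cos\!\big(\rho|x|-\tfrac{(d-1)\pi}{4}\big)+O\big((\rho|x|)^{-(d+1)/2}\big)$ (valid since $\rho|x|\sim N^{1/2}\gg1$ on $\supp\mathcal F_\ep$ when $|x|\sim1$), and substituting $\rho^2=N(1+\ep w)$ — which turns $\mathcal F_\ep$ into $\psi(-w)$, turns $(1-|\xi|^2/t_\ep^2)_+^\delta$ into $\ep^\delta(1+\beta_0\ep)^{-\delta}(\beta_0-w)_+^\delta$, and gives $\rho|x|=N^{1/2}|x|+O(\ep N^{1/2}|x|)$ on the effective range $|w|\lesssim1$ — one obtains, after setting $v=-w$,
\[
S_{t_\ep}^\delta f_\ep(x)=c\,\ep^{1+\delta}N^{(d+1)/4}\,c(\beta_0)\cos\!\big(N^{1/2}|x|-\tfrac{(d-1)\pi}{4}\big)+o\big(\ep^{1+\delta}N^{(d+1)/4}\big)
\]
uniformly for $|x|\in[1,2]$: the main term arises from $\int(\beta_0+v)_+^\delta\psi(v)e^{\mp i\theta_\ep(x)v}\,dv\to c(\beta_0)$ with $\theta_\ep(x)=O(\ep N^{1/2})=O(\ep^{1-\gamma/2})$; the tail $|v|>V$ contributes, by Schwartz decay of $\psi$ (and since the Bochner–Riesz factor vanishes for $v<-\beta_0$, so one may take $V\ge2\beta_0$), at most $o_V(1)\,\ep^{1+\delta}N^{(d+1)/4}$; and the remaining errors are $O(\ep^{1-\gamma/2}+N^{-1/2})$ relative to the main term. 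Taking $V$ large and $\ep$ small, $|S_{t_\ep}^\delta f_\ep(x)|\gtrsim|c(\beta_0)|\,\ep^{1+\delta}N^{(d+1)/4}$ on $G_\ep:=\{x:1\le|x|\le2,\ |\cos(N^{1/2}|x|-\tfrac{(d-1)\pi}{4})|\ge1/2\}$, whose measure tends to $\tfrac23|\{1\le|x|\le2\}|$. Dividing by $\|f_\ep\|_p$ and using \eqref{e:lowfe} with $N=\ep^{-\gamma}$, the right side is $\gtrsim\ep^{\,\frac12+\delta-\frac d2+\frac dp+\gamma(\frac{d-1}4-\frac d{2p})}$, whose exponent is negative because $\delta<\delta(d,p)$ forces $\tfrac12+\delta-\tfrac d2+\tfrac dp<0$ and $\gamma$ is small; this is the claimed $\ep^{-\eta}$.

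Now set $t_{j_0}:=t_{\ep_{j_0}}$, $G_{j_0}:=G_{\ep_{j_0}}$. Using $\mathcal S(\R^d)\subset\mathcal C^{p,\delta}$, Proposition \ref{prop:welldef}, and additivity of $S_t^\delta$ on $\mathcal C^{p,\delta}$, write $\mathfrak F=\sum_{j\le j_0}2^{-j}\|f_j\|_p^{-1}f_j+g_{j_0}$ with $g_{j_0}:=\mathfrak F-\sum_{j\le j_0}2^{-j}\|f_j\|_p^{-1}f_j\in\mathcal C^{p,\delta}$; since the $f_j$ are Schwartz, $S_{t_{j_0}}^\delta f_j=\mathcal F^{-1}\big[(1-|\cdot|^2/t_{j_0}^2)_+^\delta\mathcal F_j\big]$, so $S_{t_{j_0}}^\delta\mathfrak F=2^{-j_0}\|f_{j_0}\|_p^{-1}S_{t_{j_0}}^\delta f_{j_0}+T_2+T_3$ with $T_2:=\sum_{j<j_0}2^{-j}\|f_j\|_p^{-1}S_{t_{j_0}}^\delta f_j$ and $T_3:=S_{t_{j_0}}^\delta g_{j_0}$. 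By Step 1, on $G_{j_0}$ the first term has modulus $\ge c\,2^{-j_0}\ep_{j_0}^{-\eta}=c\,2^{-j_0}2^{\eta2^{j_0}}\to\infty$. For $T_3$: since $S_{t_{j_0}}^\delta[k,\varphi]g_{j_0}=\sum_{j>j_0}2^{-j}\|f_j\|_p^{-1}S_{t_{j_0}}^\delta[k,\varphi]f_j$, one has $|T_3(x)|\le\sum_{j>j_0}2^{-j}\|f_j\|_p^{-1}\sup_k|S_{t_{j_0}}^\delta[k,\varphi]f_j(x)|$, and summing the shell bounds \eqref{e:bd} over all shells for fixed $j$ gives $\sup_k|S_{t_{j_0}}^\delta[k,\varphi]f_j(x)|\lesssim_M\ep_j^{cM}$ for some $c>0$; since $\|f_j\|_p\sim\ep_j^{\beta_p}$ with $\beta_p>0$, this yields $\|T_3\|_\infty\lesssim_M\sum_{j>j_0}2^{-j}\ep_j^{cM-\beta_p}\to0$ as $j_0\to\infty$ for $M$ large. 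For $T_2$ (a finite sum): for $j<j_0$ we have $N_j^{1/2}\ll t_{j_0}$, so multiplying $(1-|\xi|^2/t_{j_0}^2)_+^\delta$ by a fixed smooth frequency cutoff at level $t_{j_0}/2$ yields a symbol that is, uniformly in $j_0$, convolution with an $L^1$-kernel, hence bounded on $L^p$ for $1\le p\le\infty$; since $\mathcal F_j$ is supported there up to a negligible ($\lesssim_M\ep_j^M$) tail, $\|S_{t_{j_0}}^\delta f_j\|_p\lesssim\|f_j\|_p$, so $\|T_2\|_p\lesssim\sum_{j\ge0}2^{-j}\lesssim1$ uniformly in $j_0$.

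To finish: fix $\Lambda>0$, pick $\Lambda_0$ with $(\|T_2\|_p/\Lambda_0)^p<c_*/2$ (for $p=\infty$ take $\Lambda_0=\|T_2\|_\infty$), and then $j_0$ large enough that $\|T_3\|_\infty\le1$ and $c\,2^{-j_0}2^{\eta2^{j_0}}>\Lambda+\Lambda_0+1$. On $G_{j_0}\setminus\{|T_2|>\Lambda_0\}$, which has measure $\ge c_*/2$, we get $|S_{t_{j_0}}^\delta\mathfrak F(x)|>\Lambda$; hence $\big|\{x\in\{1\le|x|\le2\}:\sup_{t>0}|S_t^\delta\mathfrak F(x)|>\Lambda\}\big|\ge c_*/2$ for every $\Lambda$, and letting $\Lambda\to\infty$ along the integers (a decreasing intersection inside a fixed bounded set) gives $\big|\{x:\sup_{t>0}|S_t^\delta\mathfrak F(x)|=\infty\}\big|\ge c_*/2\gtrsim1$; if the statement is read with the real part, apply this to $\re\mathfrak F$ or $\im\mathfrak F$, using that $K_t^\delta$ is real-valued. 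The principal difficulty is Step 1 — extracting the precise oscillatory main term of $S_{t_\ep}^\delta f_\ep$ by stationary phase, checking $c(\beta_0)\neq0$ for a suitable $\beta_0$, and controlling the Schwartz tails of $\psi$ (a priori only polynomially below the main term) together with the lower-order errors, all uniformly on $\{|x|\sim1\}$.
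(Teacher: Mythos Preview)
Your argument is essentially correct and follows the same Stein-type blueprint as the paper, but the key technical ingredient is genuinely different. The paper does \emph{not} compute $S_{t}^\delta f_\ep$ directly at a carefully chosen time; instead it uses the Weyl fractional-derivative identity $\Psi=\Psi^{(\delta+1)}\ast\chi_-^{\delta-1}$ to obtain the pointwise inequality
\[
|\Psi_j(-\Delta)\mathfrak F(x)|\ \lesssim\ \ep_j^{-\delta}\,\sup_{t>0}|S_t^\delta\mathfrak F(x)|,
\]
where $\Psi_j$ is a smooth bump on $[N_j-2N_j\ep_j,\,N_j+2N_j\ep_j]$. Because $\Psi_j$ is smooth and compactly supported, the lower bound for $\Psi_j(-\Delta)f_j$ on a set $\mathcal A_j$ follows cheaply from the Bessel asymptotic alone (no singular multiplier, no $c(\beta_0)\neq0$ issue, no tail control for $\psi$). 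The cross-term analysis is also lighter: for $k\neq j$ one simply uses that $\mathcal F_k$ is Schwartz-small on $\supp\Psi_j(|\cdot|^2)$. Your route trades this fractional-calculus trick for a direct oscillatory-integral computation of $S_{t_\ep}^\delta f_\ep$, which buys you a shorter list of prerequisites (no $\chi_-^\nu$ machinery) at the cost of a more delicate main-term extraction (choosing $\beta_0$ with $c(\beta_0)\neq0$, and tracking the Schwartz tails of $\psi$ against the $(\rho|x|)^{-(d-1)/2}$ decay of the spherical term, which you do need---the trivial bound $|\widehat{d\sigma}|\lesssim1$ would not suffice).

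One point deserves tightening. In your bound for $T_3$ you invoke \eqref{e:bd} and ``sum the shell bounds over all shells.'' But the constant in \eqref{e:bd} is $C=C(d,t,M,\widetilde\varphi)$; in the paper's proof it grows like $t^{d}(1+t)^{2M}$ in the middle regime, and here $t=t_{j_0}\sim N_{j_0}^{1/2}$ is large. Also \eqref{e:bd} is stated for $\widetilde\varphi\in C_c^\infty(\R^d\setminus 0)$, not for your $\varphi$. The conclusion you want, $\sup_k|S_{t_{j_0}}^\delta[k,\varphi]f_j(x)|\lesssim_M\ep_j^{cM}$ for $j>j_0$, is true, but the clean way to see it is direct: write $\widehat{\varphi(2^{-k}\cdot)f_j}=2^{kd}\widehat\varphi(2^k\cdot)\ast\mathcal F_j$ and observe that on $\{|\xi|\le t_{j_0}\}$ this is $\lesssim_M\ep_j^M$ uniformly in $k$ (split $\eta$ into $|\eta|\le 2t_{j_0}$, where $|\mathcal F_j|\lesssim_M\ep_j^M$, and $|\eta|>2t_{j_0}$, where $|\widehat\varphi(2^k(\xi-\eta))|$ gives $(2^kt_{j_0})^{d-L}$ decay); then $|S_{t_{j_0}}^\delta[k,\varphi]f_j(x)|\lesssim_M t_{j_0}^d\ep_j^M$, and the super-exponential separation $N_{j_0}\le\ep_j^{-\gamma/2}$ absorbs the $t_{j_0}^d$ loss. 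With this fix the rest of your argument (the $\|T_2\|_p\lesssim1$ bound via the smooth multiplier at low frequency, and the decreasing-intersection measure argument) goes through.
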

Throughout the proof, we assume $|x|\le 10$. To proceed, we consider $\chi_-^\nu\in \mathcal S'(\R)$ defined by
\[
\inp{\chi_-^\nu}{g} = \frac{\inp{x_-^\nu}{g}}{\Gamma(\nu + 1)} ,\quad g\in \mathcal S(\R)
\]
for $\text{Re}\,\nu > -1$, where $\Gamma$ denotes the Gamma function and $x_-^\nu := |x|^\nu \chi_{(-\infty, 0)}(x)$, $\nu\in \C$.
Although $x_-^\nu$ is not locally integrable when $\text{Re}\,\nu \le -1$, we can define $\chi_-^\nu$ as a tempered distribution for such $\nu$'s by using the analytic continuation argument, see \cite[Section 3.2]{H83}.
Now we define for $\Psi\in C_c^\infty([0,\infty))$ and $\nu\in \R$, the Weyl fractional derivative $\Psi^{(\nu)}$ of $\Psi$ of order $\nu$ by $\Psi^{(\nu)} = \Psi * \chi_-^{-\nu-1}$. It can be written as
\[
\Psi^{(\nu)}(t) = \frac{c_\nu}{\Gamma(-\nu)}\int_0^\infty u^{-\nu-1} \Psi(t+u) du 
\]
with $c_\nu\in \C$ if $\nu<0$. For $\nu\ge 0$,
\begin{align}\label{id:fnu}
    \Psi^{(\nu)}(t) = \frac{c_{\nu,\nu_0}}{\Gamma(-\nu+\nu_0)}\int_0^\infty u^{-\nu+\nu_0-1} \Psi^{(\nu_0)}(t+u) du
\end{align}
with $c_{\nu,\nu_0}\in \C$, where $\nu_0$ is a natural number such that $\nu<\nu_0\le \nu+1$ and $\Psi^{(\nu_0)}$ appearing in the integral is interpreted as a standard $\nu_0$-th derivative of $\Psi$.
Then it is known that $\Psi^{(\nu)}$ satisfies the relation $\Psi = \Psi^{(\nu)}*\chi_-^{\nu-1}$.
Using this, we write
\begin{align}\nonumber
    \Psi(-\Delta) F(x) &= (2\pi)^{-d}\lim_{k\to \infty}\int \Psi(|\xi|^2)\,e^{i\inp{x-y}{\xi}} \chi_{\circ, k}(y) F(y) d\xi dy \\\nonumber
    &= (2\pi)^{-d}\lim_{k\to \infty}\int \bigg(\int_0^\infty \Psi^{(\delta+1)}(t) \big(|\xi|^2 - t\big)_-^\delta dt\bigg) e^{i\inp{x-y}{\xi}} \chi_{\circ, k}(y) F(y) d\xi dy \\\label{id:fdelta}
    &=(2\pi)^{-d}\lim_{k\to \infty} c_\delta\int_0^\infty t^\delta \Psi^{(\delta+1)}(t)
    \cH_k^\delta F(x,\sqrt t)\,dt
\end{align}
with $c_\delta\in \C$.
To justify passing the limit inside the integral, we use \eqref{id:fnu}, which shows that $\Psi^{(\delta+1)}$ is an integrable function whose support is compact. Combining this with \eqref{id:welldef} yields
\begin{align}\nonumber
    \big|\Psi(-\Delta) F(x)\big| &= \frac{1}{\Gamma(\delta+1)} \bigg| \int_0^\infty t^\delta \Psi^{(\delta+1)}(t)
    S_{\sqrt t}^\delta F(x)\,dt\bigg| \\\label{e:fdest}
    &\lesssim_\delta\,\sup_{t>0}\big|S_t^\delta F(x)\big| \int_0^\infty \big|t^\delta \Psi^{(\delta+1)}(t)\big|dt.
\end{align}
Now we set $\Psi_j(t) := \eta((N_j\ep_j)^{-1}(t-N_j))$ for each $j\ge 0$, where $\eta\in C_c^\infty(\R)$ such that $\supp(\eta)\subset (-c_0,c_0)$, $\eta\ge 0$, $\eta\equiv1$ on $[-c_0/2,c_0/2]$. Here $c_0>0$ is chosen so small that 
\begin{align}\label{c:eta-repsi}
    \operatorname{Re}\psi(s)\ge c>0\quad \text{ for $|s|\le c_0$}.
\end{align}
Replacing $\Psi$, $\delta$ appearing in \eqref{id:fnu} with $\Psi_j$ and $\delta+1$, respectively, we get
\[
\Psi_j^{(\delta+1)}(t) = C_\delta (N\ep)^{-\delta_1} \int_0^\infty u^{-\delta+\delta_1-2} \eta^{(\delta_1)}((N\ep)^{-1} (t+u-N)) du.
\]
Here $\delta_1$ denotes a natural number such that $\delta+1 < \delta_1 \le \delta+2$. 
By change of variables $u' = (N\ep)^{-1} (t+u-N)$, the right-hand side equals
\begin{align}\label{t:nepint}
    C_\delta (N\ep)^{-\delta-1} \int_{(N\ep)^{-1}(t-N)}^\infty \big(u' + (N\ep)^{-1}(N-t)\big)^{-\delta+\delta_1-2} \eta^{(\delta_1)}(u')du',
\end{align}
which is bounded by
\begin{align}\label{t:bd-nepint}
    C_\delta (N\ep)^{-\delta-1} \big(1 + (N\ep)^{-1}|N - t|\big)^{-\delta-2} \chi_{(-\infty, N+1]}(t).
\end{align}
Indeed, if $t>N(1+2\ep)$, then the integral in \eqref{t:nepint} vanishes because 
\[
\supp(\eta)\cap ((N\ep)^{-1}(t-N),\infty) = \emptyset.
\]
When $N(1-2\ep)\le t\le N(1+2\ep)$, \eqref{t:nepint} is bounded by a constant times $(N\ep)^{-\delta-1}$. This follows from $(N\ep)^{-1}|N-t|\le 2$ and $-1 < -\delta+\delta_1-2\le 0$, ensuring that $(u'+(N\ep)^{-1}|N-t|)^{-\delta + \delta_1 - 2}$ is locally integrable near $0$.
Lastly, in the case $t<N(1-2\ep)$, we make use of integration by parts $\delta_1$ times to write \eqref{t:nepint} as
\[
C_\delta (N\ep)^{-\delta-1} \int \big(u' + (N\ep)^{-1}(N-t)\big)^{-\delta-2} \eta(u')du'.
\]
The desired bound \eqref{t:bd-nepint} follows immediately from this expression.

Combining \eqref{t:bd-nepint} with \eqref{e:fdest}, we deduce
\begin{align*}
    \big|\Psi_j(-\Delta)F(x)\big|\lesssim_\delta (N\ep)^{-\delta-1} \sup_{t>0}\big|S_t^\delta F(x)\big|\int_0^{N+1} t^\delta \big(1 + (N\ep)^{-1}|N - t|\big)^{-\delta-2} dt.
\end{align*}
By a simple calculation, we conclude that $\Psi_j(-\Delta)F$ satisfies the bound
\begin{align}\label{e:fjstd}
    \big|\Psi_j(-\Delta) F(x)\big| \lesssim \ep^{-\delta} \sup_{t>0}\big|S_t^\delta F(x)\big|,
\end{align}
where we omit the dependence on $\delta$. Now we claim that there exists a collection of sets $\{\cA_j\}_{j\in\N}$ contained in $\{1/2\le |x|\le 2\}$ such that for all $j$ sufficiently large,
\begin{align*}
    &\big|\Psi_j(-\Delta) f_j(x)\big|\ge C \big(\ep_j N_j^{\frac12}\big)^{-\delta(d,p)}\|f_j\|_{L^p}\ \ \text{for}\ x\in \cA_j, \\
    &\big|\cA_j\big|\ge C
\end{align*}
with a fixed constant $C>0$ independent of $j$.
Postponing the proof of the claim until the end of this subsection and assuming this for the moment, we complete the proof.
Since $\cA_j\subset \{1/2\le |x|\le 2\}$ and $|\cA_j|\ge C$ for all $j$, 
\[
\big|\limsup_{j\to \infty} \cA_j\big|\ge C.
\]
This is easy to see. Indeed, from the bounds of $\cA_j$ it follows that $|\bigcup_{j=\ell}^\infty \cA_j|\ge C$ for any $\ell$. Since $\limsup \cA_j = \bigcap_{\ell\ge 1} \bigcup_{j=\ell}^\infty \cA_j$, the conclusion follows.
Now we let $x\in \limsup \cA_j$. Then we can choose a subsequence $\{j^x\}\subset \N$ satisfying 
\begin{align*}
    \big|\Psi_{j^x}(-\Delta) f_{j^x}(x)\big|\ge C (\ep_{j^x} N_{j^x}^{\frac12})^{-\delta(d,p)}\|f_{j^x}\|_{L^p}.
\end{align*}
Using this together with \eqref{e:fjstd} gives
\begin{align}\nonumber
    \sup_{t>0}\big|S_t^\delta F(x)\big| &\gtrsim \ep_{j^x}^\delta \big|\Psi_{j^x}(-\Delta) F(x)\big| \\\label{e:stdlb}
    &\gtrsim 2^{-j^x}\ep_{j^x}^{\delta - \delta(d,p)} N_{j^x}^{-\frac{\delta(d,p)}{2}} - \ep_{j^x}^{\delta}\bigg(\sum_{k\neq j^x} 2^{-k}\|f_{k}\|_{L^p}^{-1}\big|\Psi_{j^x}(-\Delta) f_{k}(x)\big|\bigg).
\end{align}
In the last line, the second summand is negligible compared to the first one if $j^x$ is sufficiently large.
To see this, we note
\begin{align}\label{i:Psifk-int}
    \Psi_{j^x}(-\Delta) f_k(x) = \int \Psi_{j^x}(|\xi|^2) \mathcal F_k(\xi) e^{i\inp{x}{\xi}} d\xi.
\end{align}
Since 
\[
|\mathcal F_k|\le C_M\begin{cases}
    \ep_{j^x}^M, & k< j^x, \\
    \ep_k^M, & k>j^x
\end{cases}
\]
for any $M\in\N$ on the support of $\Psi_{j^x}(|\xi|^2)$, \eqref{i:Psifk-int} implies
\begin{align*}
    \big|\Psi_{j^x}(-\Delta) f_{k}(x)\big|
    &\lesssim_M \begin{cases}
        \ep_{j^x}^{-10d+M}, & k<j^x \\
        \ep_k^{-10d+M}, & k>j^x.
    \end{cases}
\end{align*}
Substituting this into \eqref{e:stdlb} and applying \eqref{e:lowfe} gives
\begin{align*}
    &\ep_{j^x}^{\delta}\bigg(\sum_{k\neq j^x} 2^{-k}\|f_k\|_{L^p}^{-1}\big|\Psi_{j^x}(-\Delta) f_k(x)\big|\bigg) \\
    &\hspace{32mm} \lesssim \ep_{j^x}^{\delta}\bigg(\sum_{k< j^x} 2^{-k}\ep_{j^x}^{-100d+M} + \sum_{k> j^x} 2^{-k}\ep_{k}^{-100d+M}\bigg) \\
    &\hspace{32mm}\ll 2^{-j^x}\ep_{j^x}^{\delta - \delta(d,p)} N_{j^x}^{-\frac{\delta(d,p)}{2}}
\end{align*}
if we choose $M>1000d$, as desired.
Now we observe that the first summand in \eqref{e:stdlb} tends to infinity as $j^x\to \infty$ since \eqref{i:cond-gamma} implies
\[
2^{-j^x}\ep_{j^x}^{\delta - \delta(d,p)} N_{j^x}^{-\frac{\delta(d,p)}{2}} = 2^{-j^x} 2^{-\sigma 2^{j^x}},\quad \sigma = \delta -\delta(d,p) +\gamma\frac{\delta(d,p)}{2} < 0.
\]
Therefore, $\sup_{t>0}\big|S_t^\delta F(x)\big| = \infty$ for every $x\in \limsup \cA_j$.
This proves the proposition.

\begin{proof}[Proof of the claim]
    Using polar coordinates, we write
    \begin{align*}
        \Psi_{j}(-\Delta) f_{j}(x) &= \int \Psi_j(r^2)\wt{\mathcal F}_{j}(r) \int_{\mathbb S^{d-1}} e^{i\inp{rx}{\sigma}}d\sigma_{\mathbb S^{d-1}} r^{d-1} dr\\
        &= c \int \Psi_j(r^2)\wt{\mathcal F}_{j}(r) J_{(d-2)/2}(r|x|) r^{\frac d2} |x|^{\frac{2-d}{2}}dr,\quad c>0,
    \end{align*}
    where $J_m(r)$ denotes the Bessel function of order $m$ and $\wt{\mathcal F}_{j}\in C_c^\infty(\R)$ defined by
    \[
    \wt{\mathcal F}_{j}(r) := \psi((\ep_j N_j)^{-1}(N_j - r^2)).
    \]
    Recall that $J_m(r)$ follows the asymptotic behavior
    \[
    J_m(r) = \bigg(\frac{2}{\pi}\bigg)^{\frac12} r^{-\frac12} \cos\Big(r-\frac{\pi m}{2}-\frac{\pi}{4}\Big) + O(r^{-\frac 32})
    \]
    when $r$ is large, see \cite[p.338]{St93} for example. Hence $\Psi_j(-\Delta)f_{j}(x)$ is expressed as
    \begin{align}\label{t:fjfej}
        c \int \Psi_j(r^2)\wt{\mathcal F}_{j}(r) r^{\frac {d-1}{2}} |x|^{\frac{1-d}{2}} \cos\Big(r|x|-\frac{d-1}{4}\pi\Big) dr + E,
    \end{align}
    where $E$ is an error term satisfying the bound
    \begin{align}\label{e:bder}
        |E|\lesssim \int \Psi_j(r^2)\wt{\mathcal F}_{j}(r) r^{\frac {d-3}2} dr \lesssim N_j^{\frac{d-1}{4}}\ep_j
    \end{align}
    provided that $1/2\le |x|\le 2$, because $\Psi_j(r^2)\wt{\mathcal F}_{j}(r)$ is supported in $\{N_j^{1/2}(1-4\ep_j)\le r\le N_j^{1/2}(1+4\ep_j)\}$.
    Now we set
    \[
    \cA_j := \Big\{x : 1/2\le |x|\le 2,\ \cos\Big(N_j^{\frac12}|x|-\frac{d-1}{4}\pi\Big)>\frac12 \Big\}
    \]
     for each $j$.
     Clearly, $\big|\cA_j\big|\ge C$ for some $C>0$ independent of $j$.
     Also, if $N_j$ is sufficiently large, $\cos\big(r|x|-(d-1)\pi/4\big)>1/4$ for all $x\in \cA_j$ and $r$ in the support of $\Psi_j(r^2)\wt{\mathcal F}_{j}(r)$.
     Therefore, by \eqref{c:eta-repsi}, the real part of the leading term in \eqref{t:fjfej} is bounded below by $C N_j^{(d+1)/4}\ep_j$ on $\cA_j$.
     Combining this and \eqref{e:bder} together with \eqref{e:lowfe}, we have
     \begin{align*}
         \big|\Psi_j(-\Delta)f_{j}(x)\big|&\gtrsim \ep_j N_j^{\frac{d+1}{4}} \ep_j^{-\frac{d+1}{2}+\frac dp} N_j^{-\frac d2+\frac{d}{2p}}\|f_{j}\|_{L^p} \\
         &\gtrsim \big(\ep_j N_j^{\frac12}\big)^{-\delta(d,p)}\|f_{j}\|_{L^p}
     \end{align*}
     for $x\in \cA_j$, which is the desired result.
\end{proof}

\subsection*{Acknowledgement}
This work was supported by the National Research Foundation of Korea(NRF) grant funded by the Korea government(MSIT) (No. RS-2026-25483775). The author thanks Eunhee Jeong, Sanghyuk Lee, and Andreas Seeger for their valuable comments and helpful discussions.

\end{document}